\documentclass[12pt, twoside, reqno]{amsart}
\usepackage{amsfonts}
\usepackage{amsmath}
\usepackage{ifthen}
\usepackage{amssymb}
\usepackage{color}
\usepackage{epsf,graphicx}

\numberwithin{equation}{section}

\newtheorem{theorem}{Theorem}[section]

\newtheorem{corollary}[theorem]{Corollary}
\newtheorem{definition}[theorem]{Definition}
\newtheorem{example}[theorem]{Example}

\newtheorem{proposition}[theorem]{Proposition}

\theoremstyle{definition}








\newcommand{\C}{{\mathbb C}}
\newcommand{\D}{{\mathbb D}}
\newcommand{\R}{{\mathbb R}}
\newcommand{\B}{{\mathbb B}}
\newcommand{\Hol}{{\operatorname{Hol}\,}}
\renewcommand{\Re}{{\operatorname{Re}\,}}
\renewcommand{\Im}{\,{\operatorname{Im}\,}}

\newcommand{\N}{{\mathcal N}}

\newcommand {\Id}{\mathop{\rm Id}\nolimits}

\begin{document}

\title[Nonlinear resolvents]{Nonlinear resolvents: distortion and order of starlikeness}
\date{\today}

\author[M. Elin]{Mark Elin}
\address{Braude College of Engineering, Karmiel,  Israel}
\email{mark\_elin@braude.ac.il}

\keywords{holomorphically accretive mapping, nonlinear resolvent, starlike mapping of order $\gamma$}
\subjclass[2020]{Primary 30D05, 46G20; Secondary 47J07, 30C45, 32H50}

\begin{abstract}
This paper presents a new approach to studying nonlinear resolvents of holomorphically accretive mappings on the open unit ball of a complex Banach space. 
We establish a distortion theorem and apply it to address problems in geometric function theory concerning the class of resolvents. 
Specifically, we prove the accretivity of resolvents and provide estimates for the squeezing ratio. 
Further, we show that nonlinear resolvents are starlike mappings of certain order and determine lower bounds for this order.
\end{abstract}

\maketitle

\section{Introduction}\label{sect-intro}

This paper is devoted to the study of properties of nonlinear resolvents, that begun in \cite{E-S-S} in the one-dimensional case and then continued in \cite{E-24a, E-J-resolv}. Like in the theory of linear operators, nonlinear resolvents play an important role in the theory of semigroups of holomorphic mappings.  

Outstanding and stimulating results in this theory were obtained in the one-dimensional case by Berkson and Porta in \cite{B-P}, in multi-dimensional settings by Abate  \cite{Ab-92}. The general situation of semigroups acting in the open unit ball of a complex Banach space was considered in the series of works by Reich and Shoikhet, starting from  \cite{R-S-96}. Over the past decades, various characterizations of semigroup generators have been found, some of them are presented in Propositions~\ref{prop-accre}, \ref{lemma1}, \ref{propo-sque} below. For more details the books \cite{B-C-DM-book, E-R-Sbook, E-S-book, R-S1, SD} and references therein.

\subsection{State of the art and main questions}\label{ssec-what}
An effective method for exploring how specific properties of the generator influence the dynamic behavior of the generated semigroup is through the use of the so-called exponential or product formula (see, for example, formula (6.183) in \cite{R-S1}). 
This formula incorporates the so-called nonlinear resolvents $G_\lambda$ of the semigroup generator $f$, defined by the formula $G_\lambda:=(\Id + \lambda f)^{-1}$, see Definition~\ref{def-range_cond} below. 
It turns out that nonlinear resolvents are biholomorphic self-mappings of the open unit ball, see~\cite{E-R-Sbook, R-S-96, R-S1}. Thus,  besides their role in dynamical systems, resolvents represent a class of mappings that is inherently intriguing from the point of view of geometric function theory.

Although this fact has been known for a long time, the study of geometric properties of nonlinear resolvents was initiated  quite recently in \cite{E-S-S}, where important geometric properties of resolvents in the open unit disk $\D\subset\C$ were established. 
Partial generalizations to multi-dimensional settings are given  in \cite{GHK2020, HK2022}. 
Obtaining full multi-dimensional analogues is a more complicated problem, requiring a new method. 

In the one-dimensional case, the above mentioned study was continued in  \cite{E-J-resolv}, where a different approach to establishing both geometric and dynamic properties was presented. Its use made it possible to sharpen previous results, as well as to consistently establish new facts, each as a consequence of the previous one. An infinite-dimensional analog of this approach was first suggested in \cite{E-24a}.

In this paper,  we develop an approach that is a combination of one used in  \cite{E-J-resolv, E-24a} with  a different idea.  As before, the object of our study is nonlinear resolvents that are  holomorphic in the open unit ball of a complex Banach space and preserve the origin (this is equivalent to $f(0)=0$, for details see Section~\ref{sect-prelim-aux}). More precisely, we consider resolvents belonging to a class of mappings (denoted $\widehat{\Hol}(\B,X)$, see Definition~\ref{def-odt}), which in the one-dimensional case coincides with the class of all functions that vanish at the origin, but in general settings is a proper subclass of all such mappings. Various features of this class have been studied by many mathematicians. 
In particular, their geometric properties are set to \cite{Bav, D-L-14, Lic-86}. The works \cite{D-L-21, D-L-22, H23, X-X-18} are devoted to estimates of some coefficient functionals for such mappings. Semigroups generated by holomorphically accretive mappings of this class  were considered, for instance, in \cite{deFab, E-S-04}. Some additional information can be found in the book \cite{G-K}.               It can be expected that the method presented below may allow to study nonlinear resolvents that are not vanishing at the origin.   This prospect will be explored elsewhere. 

To clearly present the approach, we will focus on three specific problems. We utilize two parameters with clear dynamic meaning to distinguish particular subclasses of resolvents. The union of these subclasses represents the set of all resolvents that preserve zero. For each subclass, we will address the following questions:

\begin{itemize}
  \item [$\bullet$] Establish sharp distortion theorem for this subclass.\vspace{1mm}
    \item [$\bullet$] Establish whether resolvents are holomorphically accretive. If the answer is affirmative, find squeezing ratio for semigroups generated by them.\vspace{1mm}
  \item [$\bullet$] Find  order of starlikeness of non-linear resolvents from each specific class. 
\end{itemize} 
Regarding the last question, it is noteworthy that, as proven in \cite{E-S-S}, any resolvent in the open unit disk is a starlike function of order at least $\frac12\,.$  Furthermore, when the resolvent parameter $\lambda$ exceeds a certain critical value, it has been shown in \cite{E-J-resolv} that the order of starlikeness of nonlinear resolvents can be estimated from below by a continuous function that approaches $1$ as $\lambda\to\infty$. As for the finite-dimensional case, it was shown in \cite{GHK2020} that resolvents of holomorphically accretive mappings from the class $\widehat{\Hol}(\B,\C^n)$ are starlike but order of starlikeness was not determined.  For the infinite-dimensional case see \cite{E-24a}.

\subsection{Main results and outlines}\label{ssec-main}
 In what follows we study nonlinear resolvents of holomorphically accretive mappings $f$ on the open unit ball $\B$ of a complex Banach space. Specifically, we assume that  for some $a\ge0$ we have $\displaystyle \Re \left\langle f(x),x \right\rangle \ge a\|x\|^2$  for all $x\in\B$. 

The main result of this paper is the following distortion theorem for resolvents: 

 \begin{theorem}\label{thm-reso-general-1-a}
  Let $f$ have the form $f(x)=p(x)x$,  where $p\in\Hol(\B,\C)$ with $\Re p(x) \ge a,$  $x\in\B$, for some $a\ge0$. Let $\{G_\lambda\}_{\lambda>0}$ be the resolvent family for~$f$. Denote $q=p(0)$. Then for any $\lambda>0$ we have 
   \begin{equation}\label{ineq-reso2}
   \left\|  G_\lambda(x)  \right\|  \le  \left(\frac{2}{A+\sqrt{B}}\right)^{\frac12} \|x\|,\qquad   x\in\B,
   \end{equation}
where  $A:=\left| 1 - \lambda q  \right|^2+4\lambda a+1$ and $B:=\left(|1-\lambda q|^2-1\right)^2 +8\lambda^3a|q|^2$. 
\end{theorem}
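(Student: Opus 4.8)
The plan is to exploit the special radial form $f(x)=p(x)x$ to reduce the estimate to a one–variable problem, then to combine a Schwarz--Pick confinement of the values of $p$ with a continuity argument that singles out the correct branch of the resolvent. \emph{Step 1 (reduction to the complex line through $x$).} Writing $w=G_\lambda(x)$, the defining relation $(\Id+\lambda f)(w)=x$ reads $(1+\lambda p(w))\,w=x$. Since $\Re(1+\lambda p(w))\ge 1+\lambda a>0$, the scalar $1+\lambda p(w)$ is nonzero, so $w$ is a complex multiple of $x$. Put $u=x/\|x\|$ and $w=\zeta u$ with $\zeta\in\D$; then $\|G_\lambda(x)\|=|\zeta|$, and with the holomorphic function $\phi(\zeta):=p(\zeta u)$, which satisfies $\Re\phi\ge a$ and $\phi(0)=q$, the relation becomes the scalar identity
\begin{equation}\label{eq-scalar}
(1+\lambda\phi(\zeta))\,\zeta=\|x\|.
\end{equation}
In particular $\|x\|=|1+\lambda\phi(\zeta)|\,|\zeta|$, so everything reduces to bounding $|1+\lambda\phi(\zeta)|$ from below.

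\emph{Step 2 (Schwarz--Pick confinement).} Set $r=|\zeta|=\|G_\lambda(x)\|$. Let $\sigma$ be the Cayley transform of the half-plane $\{\,\Re z>a\,\}$ onto $\D$ with $\sigma(q)=0$; since $\sigma\circ\phi:\D\to\D$ fixes the origin, the Schwarz lemma gives $|\sigma(\phi(\zeta))|\le r$, i.e. $\phi(\zeta)$ lies in the closed disk $D_r=\sigma^{-1}(\{|s|\le r\})$, whose center $K(r)$ and radius $R(r)$ are computed explicitly (with $R(r)$ proportional to $\Re q-a\ge0$). Consequently $1+\lambda\phi(\zeta)$ lies in the disk $1+\lambda D_r$, which is contained in $\{\,\Re z\ge 1+\lambda a\,\}$ and therefore does not contain the origin; hence
\begin{equation}\label{eq-mr}
|1+\lambda\phi(\zeta)|\ \ge\ m(r):=\operatorname{dist}\bigl(0,\,1+\lambda D_r\bigr)=|1+\lambda K(r)|-\lambda R(r),
\end{equation}
an explicit function of $r$. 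Together with \eqref{eq-scalar} this yields $\|x\|\ge m(r)\,r$.

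\emph{Step 3 (calibration and the branch).} Writing $C=\bigl(2/(A+\sqrt B)\bigr)^{1/2}$, the heart of the matter is the polynomial inequality
\begin{equation}\label{eq-crux}
m(r)\ \ge\ \tfrac1C\qquad(0\le r\le C),
\end{equation}
with equality at $r=C$; indeed, substituting the explicit $K(r),R(r)$ into $m(C)=1/C$ and squaring is exactly the computation that produces $A=|1-\lambda q|^2+4\lambda a+1$ and $B=(|1-\lambda q|^2-1)^2+8\lambda^3a|q|^2$, so that $1/C^2=(A+\sqrt B)/2$ is the relevant root of the resulting quadratic. The subtlety is that $m(r)$ falls below $1/C$ once $r>C$, reflecting the fact that $\zeta$ is not an arbitrary point of $\D$ but the value of a specific branch of the resolvent; thus \eqref{eq-mr}--\eqref{eq-crux} give information only while $r\le C$. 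To close this gap I argue by continuity along the ray $x=su$, $s\in[0,1)$: the quantity $r(s)=\|G_\lambda(su)\|$ is continuous with $r(0)=0$, and at a hypothetical first parameter $s_0$ with $r(s_0)=Cs_0$ one would have $r(s_0)=Cs_0<C$, whence \eqref{eq-crux} holds strictly and $s_0=\|x\|\ge m(r(s_0))\,r(s_0)>\tfrac1C\,r(s_0)=s_0$, a contradiction. Therefore $r(s)\le Cs$ for all $s$, which is precisely \eqref{ineq-reso2}.

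The main obstacle is Step 3: the naive Schwarz--Pick bound \eqref{eq-mr} is too weak for large $r$, so the whole argument hinges on showing that $C$ is calibrated so that \eqref{eq-crux} holds up to the critical radius $r=C$, and on the continuity argument that confines $\|G_\lambda(x)\|$ to $[0,C\|x\|]$. Verifying \eqref{eq-crux}, together with the strict inequality $m(r)>1/C$ for $r<C$ (except in the degenerate case $\Re q=a$, where $\phi\equiv q$, $m\equiv1/C$, and \eqref{ineq-reso2} holds with equality), is the only genuinely computational part; the same extremal configuration, namely $\phi=\sigma^{-1}$ composed with a rotation, shows that the bound is sharp.
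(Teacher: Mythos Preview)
Your Steps~1 and~2 are essentially the paper's argument: reduce to a complex line through $x$, then use the Schwarz--Pick (equivalently, Riesz--Herglotz/Harnack) confinement of $p$ to a disk to bound $|1+\lambda p(w)|$ from below by an explicit function $m(r)$ of $r=\|w\|$. The divergence is in Step~3. The paper's finish is considerably simpler than your continuity argument: from $\|x\|\ge m(r)\,r$ and $\|x\|<1$ one gets $r\,m(r)<1$; squaring and substituting the explicit center and radius turns this into a quadratic inequality in $t=r^2$ whose smaller root is exactly $C^2=2/(A+\sqrt B)$, so (by connectedness of the range of $\|G_\lambda\|$) one concludes $r<C$ for every $x\in\B$. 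Since $G_\lambda(x)=g_\lambda(x)x$ with $g_\lambda\in\Hol(\B,\C)$, on each complex line the one-variable Schwarz lemma upgrades this uniform bound to $\|G_\lambda(x)\|\le C\|x\|$. No continuity or branch-selection argument is needed.

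Your route can be salvaged, but two points need attention. First, the phrase ``first parameter $s_0$ with $r(s_0)=Cs_0$'' is ill-posed because $s_0=0$ always qualifies; you must argue separately that $r(s)/s\to 1/|1+\lambda q|=1/m(0)<C$ as $s\to0^+$ in order to secure $s_0>0$. Second, inequality \eqref{eq-crux} in its strict form $m(r)>1/C$ for $r<C$ is the crux of your argument and you do not prove it. It is in fact true: the Schwarz--Pick disks $D_r=\sigma^{-1}(\{|s|\le r\})$ are strictly nested in $r$ (for $\Re q>a$), hence $m(r)=\operatorname{dist}(0,\,1+\lambda D_r)$ is strictly decreasing, and combined with $m(C)=1/C$ this yields the claim. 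But once this monotonicity is noted, the paper's two-line Schwarz-lemma finish is both shorter and cleaner than the ray-wise continuity argument.
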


{\it Inter alia}, this theorem enables us to show that nonlinear resolvents themselves are holomorphically accretive and to find squeezing ratio of semigroups generated by them (see Theorem~\ref{thm-class}). 

The second main result establishes order of starlikeness for resolvents. To formulate it we denote $ T(r)= \frac{2 \alpha r} {(1+\beta)(1-r)^2 +\alpha(1-r^2)}$,  $\alpha>0,\ \beta\ge0$, and $ \rho^*:= \frac{\sqrt{1+\lambda\Re q}}{\sqrt{2\lambda(\Re q-a)}+\sqrt{1+\lambda\Re q}}.$

\begin{theorem}\label{thm-1dim-a}
Under the above conditions $G_\lambda$ is a starlike mapping of order $\frac12$ for any $\lambda>0$. Moreover, if for some $\lambda>0$, $\|G_\lambda(x)\| \le \rho$ on $\B$, then
  \[
  \left| \frac{1}{\|x\|^2} \left\langle \left(G_\lambda'(x) \right)^{-1} G_\lambda(x), x^* \right\rangle -1\right| \le  T(\rho) \quad\mbox {for all}\quad  x\in\B,
 \]
where $\alpha= \lambda (\Re q-a)$ and $\beta=\lambda a$.    Consequently, if $\rho\le\rho^*$, then $G_\lambda$ is starlike of order $\frac{1}{1+T(\rho)}$ and strongly starlike of order $\frac{2\arcsin T(\rho)}\pi$.
\end{theorem}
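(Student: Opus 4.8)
The plan is to reduce everything to a one-dimensional computation along the complex line through $x$, and then feed the resulting scalar into classical estimates for functions with positive real part. First I would differentiate the resolvent identity. Writing $w=G_\lambda(x)$, the relation $(\Id+\lambda f)(w)=x$ reads $x=(1+\lambda p(w))w$; in particular $w$ is a scalar multiple of $x$. Differentiating $F(G_\lambda(x))=x$ with $F=\Id+\lambda f$ gives $\left(G_\lambda'(x)\right)^{-1}=F'(w)=\Id+\lambda f'(w)$, and since $f'(w)[h]=p(w)h+(p'(w)[h])w$ one finds $\left(G_\lambda'(x)\right)^{-1}G_\lambda(x)=\left(1+\lambda p(w)+\lambda p'(w)[w]\right)w$. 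Pairing with $x^*$, using $\langle x,x^*\rangle=\|x\|^2$ together with $\langle w,x^*\rangle=\|x\|^2/(1+\lambda p(w))$ (which follows from $x=(1+\lambda p(w))w$), the expression collapses to the scalar
\[
J(x):=\frac{1}{\|x\|^2}\left\langle\left(G_\lambda'(x)\right)^{-1}G_\lambda(x),x^*\right\rangle=1+\frac{\lambda\,p'(w)[w]}{1+\lambda p(w)},
\]
independent of the choice of support functional. Setting $u=w/\|w\|$, $r=\|w\|$ and $\Phi(\zeta)=1+\lambda p(\zeta u)$, I would rewrite $J(x)-1=r\Phi'(r)/\Phi(r)$, where $\Phi$ is holomorphic on $\D$ with $\Re\Phi\ge 1+\beta$, $\Phi(0)=1+\lambda q$, and, crucially, $r|\Phi(r)|=\|x\|<1$.

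The order $\tfrac12$ claim is equivalent to $|J(x)-1|\le1$, since the inequality $|J-1|\le1$ is exactly $\Re\tfrac1J\ge\tfrac12$. To prove it I would bound $|\Phi'(r)|$ by the Schwarz--Pick inequality for the half-plane $\{\Re>1+\beta\}$, namely $|\Phi'(r)|\le 2(\Re\Phi(r)-(1+\beta))/(1-r^2)$, and use $|\Phi(r)|\ge\Re\Phi(r)\ge 1+\beta\ge1$. The delicate point --- and the step I expect to be the main obstacle --- is that these two bounds alone do \emph{not} give $|J-1|\le1$ when $r$ is large; one must also invoke the domain constraint $r|\Phi(r)|<1$, which caps $r$ for a given $|\Phi(r)|$. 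Combining all three, the factor $2r/(1-r^2)$ gets multiplied by $(|\Phi(r)|-1)/|\Phi(r)|<1-r$, and the product reduces to $2r/(1+r)\le1$.

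For the quantitative bound under $\|G_\lambda(x)\|\le\rho$, hence $r\le\rho$, I would keep the Schwarz--Pick estimate and $|\Phi(r)|\ge\Re\Phi(r)$, but now control $\Re\Phi(r)$ from above by the Harnack inequality applied to the positive harmonic function $\Re\Phi-(1+\beta)$, whose value at $0$ is $\alpha=\lambda(\Re q-a)$; thus $\Re\Phi(r)-(1+\beta)\le\alpha\frac{1+r}{1-r}$. Substituting and simplifying (the increasing function $t\mapsto t/(t+(1+\beta))$ is maximized at the Harnack bound) produces exactly $|J(x)-1|\le T(r)$. Since $T$ is increasing on $[0,1)$ (a routine sign check on $T'$, whose numerator equals $(1+\beta)(1-r^2)+\alpha(1+r^2)>0$), this yields $|J(x)-1|\le T(\rho)$.

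Finally, the two ``consequently'' assertions are elementary consequences of $|J-1|\le T(\rho)$ via the M\"obius map $\zeta\mapsto1/\zeta$. When $T(\rho)\le1$ --- which I would verify is precisely the condition $\rho\le\rho^*$, since $\rho^*$ is the root in $(0,1)$ of $T(\rho)=1$ (solving the quadratic and rationalizing matches the stated formula) --- the disk $\{|\zeta-1|\le T(\rho)\}$ lies in the right half-plane, and its image under inversion is the disk centered at $1/(1-T(\rho)^2)$ of radius $T(\rho)/(1-T(\rho)^2)$. Reading off the minimal real part gives $\Re\tfrac1J\ge\frac{1}{1+T(\rho)}$, i.e. starlikeness of order $\frac{1}{1+T(\rho)}$, while reading off the maximal argument gives $|\arg\tfrac1J|\le\arcsin T(\rho)$, i.e. strong starlikeness of order $\frac{2\arcsin T(\rho)}{\pi}$.
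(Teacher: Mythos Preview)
Your argument is correct. The reduction to the scalar identity
\[
J(x)=1+\frac{\lambda\,p'(w)[w]}{1+\lambda p(w)}=1+\frac{r\,\Phi'(r)}{\Phi(r)},\qquad \Phi(\zeta)=1+\lambda p(\zeta u),
\]
is exactly what the paper does (this is the paper's formula~\eqref{starGr}). From that point on your route \emph{differs} from the paper's in two respects.

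\textbf{(1) The bound $|J-1|\le T(r)$.} The paper obtains it by writing the Riesz--Herglotz representation of $p(\cdot\,u)$ (Proposition~\ref{prop-ineq}\,(ii)), expressing both $\Re(1+\lambda p)$ and $\lambda z\,p'(zu)u$ as integrals against the same measure, and bounding the integrand pointwise by $T(|z|)$. You instead use the half-plane Schwarz--Pick estimate $|\Phi'(r)|\le 2\bigl(\Re\Phi(r)-(1+\beta)\bigr)/(1-r^2)$, then $|\Phi(r)|\ge\Re\Phi(r)$, then the Harnack bound $\Re\Phi(r)-(1+\beta)\le\alpha\frac{1+r}{1-r}$ combined with the monotonicity of $t\mapsto t/(t+1+\beta)$. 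A short computation shows this yields precisely $T(r)$, so the two methods land on the same sharp bound; yours is the more elementary packaging.

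\textbf{(2) Starlikeness of order $\tfrac12$.} Here the paper does \emph{not} argue directly: it reduces to the disc via Proposition~\ref{prop-star-1d} and Proposition~\ref{lemma2a} and then quotes \cite[Corollary~2.6]{E-S-S}. Your direct argument---feeding the resolvent constraint $r|\Phi(r)|<1$ into the Schwarz--Pick bound to get $|J-1|\le\frac{2r}{1+r}<1$---is a self-contained replacement for that citation and is a genuine addition.

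The ``consequently'' part (inversion of the disc $|J-1|\le T(\rho)$ to read off $\Re\frac1J\ge\frac1{1+T(\rho)}$ and $|\arg J|\le\arcsin T(\rho)$, together with the identification of $\rho^*$ as the root of $T=1$) matches the paper's Corollary~\ref{cor-star} essentially verbatim.
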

%

In the next section, we introduce required notions including holomorphically accretive mappings and nonlinear resolvents. Section~\ref{sect-onedim} is devoted to the class $\widehat{\Hol}(\B,X)$ and its subclasses. In Sections~\ref{sect-resolv-1}--\ref{sect-starlike} we formulate and prove our main results.

\medskip

\section{Preliminary and auxiliary results}\label{sect-prelim-aux}

\setcounter{equation}{0}

Let $X$ be a complex Banach  space equipped with the norm $\|\cdot\|$, $\B_r:=\left\{x\in X:\ \|x\|<r \right\}$, and $\B:= \B_1$ be the open unit ball in $X$. We denote by $X^*$  the space of all bounded linear functionals on $X$ (the dual space to $X$)  with the duality pairing $\langle x,a\rangle,\ x\in X, a\in X^*$. For each $x\in X$, the set $J(x)$, defined~by
\begin{equation}\label{Jx-set}
J(x):=\left\{ x^{\ast }\in X^{\ast }:\ \left\langle x,x^{\ast }
\right\rangle =\left\Vert x\right\Vert ^{2}= \left\Vert x^{\ast
}\right\Vert ^{2}\right\},
\end{equation}%
is non-empty according to the Hahn--Banach theorem (see, for example, \cite[Theorem~3.2]{Rudin}). It may consists of a singleton (for instance, when $X$ is a Hilbert space), or, otherwise, of infinitely many elements. Its elements $x^* \in J(x)$ are called support functionals at the point $x$. 

Let $Y$ be another complex Banach space and $D\subset X,\ D_1\subset Y$ be domains. A mapping $f:D\to D_1$ is called holomorphic if it is Fr\'echet differentiable at each point $x\in D$ (see, for example, \cite{E-R-Sbook, G-K, R-S1}). The set of all holomorphic mappings from $D$ into $D_1$ is denoted by $\Hol(D, D_1)$ and $\Hol(D)=\Hol(D,D)$. The identity mapping $\Id$ is obviously holomorphic and belongs to $\Hol(D)$. 

\vspace{2mm}

\subsection{Holomorphically accretive mappings}
The notion of numerical range for linear operators acting on a complex Banach space was introduced by Lumer \cite{Lum} and extended to non-linear holomorphic mappings by Harris \cite{Har}. For the case of a holomorphic mapping $f$ on the open unit ball $\B$ having a uniformly continuous extension to the closed ball $\overline{\B}$, the numerical range of $f$ is defined by 
\[
V(f):=\left\{\left\langle f(x),x^*\right\rangle:\ x\in\partial\B,\  x^*\in J(x)  \right\}.
\] 

The numerical range finds numerous applications in multi- and infinite-dimensional analysis, geometry of Banach spaces and other fields (see, for example, the recent books~\cite{E-R-Sbook, G-K, R-S1}). It also serves as the base to the study of so-called holomorphically accretive mappings (not nessecarily continuous on $\overline \B$). Namely,

$\diamond$ {\it for $f\in\Hol(\B,X)$ let denote following \cite{Har, H-R-S},
\begin{equation}\label{n}
n(f):=\liminf_{s\to1^-} \inf\left\{\Re w:\ w\in V(f(s\,\cdot)) \right\}.
\end{equation}
Then $f$ is said to be holomorphically accretive if $n(f)\ge0$.}  Thus, 

\begin{proposition}\label{prop-accre}
 A mapping $f\in\Hol(\B,X),\ f(0)=0,$ is holomorphically accretive if and only if $\Re \left\langle f(x),x^*\right\rangle\ge0$ for all $x\in\B$. 
\end{proposition}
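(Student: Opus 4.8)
The plan is to establish the two implications separately; in both directions the essential move is to reduce the Banach-space numerical-range condition to a statement about a holomorphic function of one complex variable obtained by restricting $f$ to a complex line, and the whole argument rests on the elementary scaling behaviour of support functionals together with the minimum principle for harmonic functions.

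The \emph{if} direction is the short one. Assume $\Re\langle f(x),x^*\rangle\ge0$ for all $x\in\B$ and all $x^*\in J(x)$. Fix $s\in(0,1)$ and a generator $\langle f(sx),x^*\rangle$ of $V(f(s\cdot))$, with $x\in\partial\B$ and $x^*\in J(x)$. From $\langle sx,sx^*\rangle=s^2=\|sx\|^2=\|sx^*\|^2$ one reads off $sx^*\in J(sx)$; since $sx\in\B$, the hypothesis gives $\Re\langle f(sx),sx^*\rangle\ge0$, and dividing by $s>0$ yields $\Re\langle f(sx),x^*\rangle\ge0$. Thus $V(f(s\cdot))$ lies in the closed right half-plane for every $s$, whence $n(f)\ge0$.

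For the \emph{only if} direction, assume $n(f)\ge0$ and fix $x_0\in\B\setminus\{0\}$ together with $x_0^*\in J(x_0)$ (the point $x_0=0$ being trivial because $f(0)=0$). Writing $r=\|x_0\|$, $u=x_0/r\in\partial\B$, $u^*=x_0^*/r$, one checks directly that $u^*\in J(u)$ and that $\Re\langle f(x_0),x_0^*\rangle=r\,\Re\langle f(ru),u^*\rangle$, so it suffices to prove $\Re\langle f(ru),u^*\rangle\ge0$. I would introduce the scalar function $\phi(z):=\langle f(zu),u^*\rangle\in\Hol(\D,\C)$, which satisfies $\phi(0)=0$, and factor out the zero by setting $g(z):=\phi(z)/z\in\Hol(\D,\C)$. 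The bridge to the hypothesis is a rotation of the slice: for each $\theta\in\R$ the point $e^{i\theta}u$ lies on $\partial\B$ and $e^{-i\theta}u^*\in J(e^{i\theta}u)$, so that $\langle f(se^{i\theta}u),e^{-i\theta}u^*\rangle=e^{-i\theta}\phi(se^{i\theta})=s\,g(se^{i\theta})$. Consequently $n(f)\ge0$ says precisely that for every $\varepsilon>0$ there is $\delta>0$ with $\Re g(se^{i\theta})\ge-\varepsilon/s$ for all $\theta$ and all $s\in(1-\delta,1)$.

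The conclusion then follows from the minimum principle: $\Re g$ is harmonic on $\D$, so for a fixed $r$ and any $s\in(\max(r,1-\delta),1)$ its minimum over $\{|z|\le s\}$ is attained on $|z|=s$, giving $\Re g(r)\ge-\varepsilon/s\ge-\varepsilon/r$ (the last step since $s>r$); letting $\varepsilon\to0$ yields $\Re g(r)\ge0$ and hence $\Re\langle f(ru),u^*\rangle=r\,\Re g(r)\ge0$. I expect the main obstacle to be exactly this passage to the interior: the definition of $n(f)$ controls the numerical range only in the boundary limit $s\to1^-$, through a $\liminf$, so one must (i) unwind that $\liminf$ into a clean $\varepsilon$--$\delta$ bound, (ii) use the rotation trick to make that bound hold simultaneously in every boundary direction $e^{i\theta}u$, and (iii) propagate it inward by harmonicity. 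The reduction to complex slices is what makes the classical one-variable minimum principle available and is the conceptual heart of the argument.
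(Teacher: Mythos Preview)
The paper does not actually prove this proposition: it is stated immediately after the definition of $n(f)$ with the word ``Thus'' and is treated as a known characterization taken from \cite{Har, H-R-S}, with no argument supplied. So there is nothing to compare against on the paper's side.

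Your argument is correct and self-contained. The ``if'' direction is exactly the scaling observation $sx^*\in J(sx)$ you give. In the ``only if'' direction the reduction to a complex line through $0$ and $x_0$, the factoring $\phi(z)=zg(z)$ (legitimate since $f(0)=0$), and the rotation trick $(e^{i\theta}u,\,e^{-i\theta}u^*)\in\partial\B\times J(e^{i\theta}u)$ are precisely what is needed to turn the global $\liminf$ hypothesis into a lower bound for $\Re g$ on an entire circle $|z|=s$, after which the minimum principle for harmonic functions pushes the bound inward to $|z|=r$. The only point worth making explicit is that $n(f)\ge0$ unwinds to: for every $\varepsilon>0$ there is $\delta>0$ with $\inf_{x\in\partial\B,\;x^*\in J(x)}\Re\langle f(sx),x^*\rangle\ge-\varepsilon$ for all $s\in(1-\delta,1)$; specializing this global infimum to your particular rotated pair gives the uniform-in-$\theta$ bound you need, so no hidden uniformity is being assumed.
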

Definition~\ref{def-dissip} below presents some refinements connected to this fact.

To demonstrate the importance of holomorphically accretive mappings we recall two concepts. One says that 

$\diamond$ {\it  $f\in\Hol(\B,X)$ is an infinitesimal generator if for any $x\in\B$ the Cauchy problem
\begin{equation}  \label{semig}
\left\{
\begin{array}{l}
\displaystyle
\frac{\partial u(t,x)}{\partial t} +f(u(t,x)) =0, \vspace{2mm} \\
u(0,x)=x,%
\end{array}%
\right.
\end{equation}
has the unique solution $u(t,x)\in\B$ for all $t\ge0$.} This solution $\left\{u(t,\cdot)\right\}_{t\ge0}\subset\Hol(\B)$ forms the semigroup with respect to composition operator.

Now we define non-linear resolvents, the main object of the study in this paper.

\begin{definition}\label{def-range_cond}
Let $f\in \Hol(\B,X)$. One says that $f$ satisfies the range condition on $\B$ if $\left( \Id+\lambda f\right) (\B) \supset\B$
  for each $\lambda > 0$  and the so-called resolvent equation
\begin{equation}\label{G*}
 w+ \lambda f(w)=x
\end{equation}
has a unique solution
\begin{equation} \label{resolvent1}
w=G_{\lambda }(x)\left( =\left( \Id + \lambda f\right) ^{-1} (x) \right)
\end{equation}%
holomorphic in $\B.$ If it is the case, every mapping $G_\lambda,\ \lambda>0,$ is  called the non-linear resolvent and the family $\{G_{\lambda }\}_{\lambda \geq 0}\in \Hol(\B)$ is called the resolvent family of $f$ on $\B.$
\end{definition}

The following statement combines Theorems~6.11 and~7.5 in \cite{R-S1} with another result obtained at first in \cite{R-S-96} (see also the recent books \cite{R-S1, E-R-Sbook}).

\begin{proposition}\label{lemma1}
Let $f\in \Hol(\B,X)$. Then
\begin{itemize}
  \item the mapping $f$ is holomorphically accretive if and only if it satisfies the range condition on $\B$.
  \item If, in addition, $f(0)=0$  and $f$ is bounded on each subset strictly inside $\B$, then $f$ is holomorphically accretive if and only if it is  an infinitesimal generator.
\end{itemize}
\end{proposition}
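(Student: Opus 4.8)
The plan is to treat the two equivalences in turn, translating each holomorphic statement into estimates with support functionals and then into the standard semigroup machinery, using the boundary quantity $n(f)$ together with Proposition~\ref{prop-accre} in the special case $f(0)=0$. For the first bullet, to show that accretivity forces the range condition I would argue uniqueness and existence separately. Uniqueness for \eqref{G*} reduces to injectivity of $\Id+\lambda f$: if $w_1+\lambda f(w_1)=w_2+\lambda f(w_2)$, pairing $w_1-w_2$ with a support functional $z^*\in J(w_1-w_2)$ gives $\|w_1-w_2\|^2=-\lambda\,\Re\langle f(w_1)-f(w_2),z^*\rangle$, so injectivity follows from the monotonicity inequality $\Re\langle f(w_1)-f(w_2),z^*\rangle\ge0$ implied by $n(f)\ge0$. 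For existence I would run a continuity (open--closed) argument: since accretivity places the numerical range of the derivative $\Id+\lambda f'(w)$ inside $\{\zeta\in\C:\ \Re\zeta\ge1\}$, this operator is invertible, so $\Id+\lambda f$ is a local biholomorphism and the set of $x\in\B$ for which \eqref{G*} is solvable inside $\B$ is open; the a priori confinement $\|G_\lambda(x)\|\le\|x\|$ coming from accretivity keeps solutions away from $\partial\B$ and makes this set closed, hence equal to $\B$, so \eqref{resolvent1} defines $G_\lambda\in\Hol(\B)$.

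For the converse I would start from the resolvents $\{G_\lambda\}$ supplied by the range condition. By the Schwarz--Pick lemma each $G_\lambda\in\Hol(\B)$ is nonexpansive in the hyperbolic metric of $\B$; writing $f(w)=\lambda^{-1}(x-w)$ with $w=G_\lambda(x)$ and letting $\lambda\to0^+$ (where $G_\lambda\to\Id$) recovers the inward-pointing condition $n(f)\ge0$ from this hyperbolic nonexpansivity. In the case $f(0)=0$ the argument is more transparent: pairing the resolvent equation with $w^*\in J(w)$ yields $\lambda\,\Re\langle f(w),w^*\rangle=\Re\langle x,w^*\rangle-\|w\|^2$, and the bound $\|G_\lambda(x)\|\le\|x\|$ forces the right-hand side to be nonnegative in the limit, giving $\Re\langle f(x),x^*\rangle\ge0$, i.e. accretivity by Proposition~\ref{prop-accre}.

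For the second bullet I would invoke the product (exponential) formula. Assuming accretivity---equivalently the range condition just established---I would set $u(t,x):=\lim_{n\to\infty}G_{t/n}^{\,n}(x)$ and verify, using the nonexpansivity of each $G_\lambda$ and the hypothesis that $f$ is bounded on each $\B_r$, the Cauchy estimates that make this limit exist locally uniformly, remain in $\B$, and solve \eqref{semig}; differentiating at $t=0$ recovers $f$, so $f$ is an infinitesimal generator. Conversely, if $f$ generates $\{u(t,\cdot)\}$, then $f(0)=0$ forces $u(t,0)=0$, each $u(t,\cdot)$ is norm-nonexpansive, and differentiating $\|u(t,x)\|\le\|x\|$ at $t=0$---equivalently pairing $\partial_t u(t,x)\big|_{t=0}=-f(x)$ with a support functional---returns $\Re\langle f(x),x^*\rangle\ge0$.

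The step I expect to be the main obstacle is the existence/surjectivity of $\Id+\lambda f$ in the first bullet together with the convergence of the product formula in the second: in a general infinite-dimensional $X$ no compactness is available, so both must rest entirely on the quantitative nonexpansivity estimates furnished by accretivity, and it is exactly here that the hypothesis ``$f$ bounded on each subset strictly inside $\B$'' becomes indispensable for controlling the iterates $G_{t/n}^{\,n}$ uniformly on the balls $\B_r$.
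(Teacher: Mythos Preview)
First, note that the paper offers no proof of Proposition~\ref{lemma1}: the sentence introducing it explicitly says it ``combines Theorems~6.11 and~7.5 in \cite{R-S1} with another result obtained at first in \cite{R-S-96}'', so there is no in-paper argument to compare against---the proposition is quoted from the literature as a black box.

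That said, your sketch contains a genuine error in the uniqueness step of the first bullet. You assert that $n(f)\ge0$ implies the two-point monotonicity $\Re\langle f(w_1)-f(w_2),z^*\rangle\ge0$ for $z^*\in J(w_1-w_2)$; this is false. Holomorphic accretivity is a \emph{one-point} condition---in the case $f(0)=0$ it says $\Re\langle f(x),x^*\rangle\ge0$ for $x^*\in J(x)$---and does not force operator-theoretic accretivity. Already for $X=\C$, the function $f(z)=z+z^2$ satisfies $\Re\bigl(f(z)/z\bigr)=\Re(1+z)>0$ on $\D$ yet has $f'(-3/5)=-1/5<0$, so monotonicity fails on a real subinterval of~$\D$. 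A second gap: the a~priori bound $\|G_\lambda(x)\|\le\|x\|$ on which your open--closed argument rests is obtained by pairing the resolvent equation with $w^*\in J(w)$ and using $\Re\langle f(w),w^*\rangle\ge0$; but this last inequality is Proposition~\ref{prop-accre}, which \emph{requires} $f(0)=0$, whereas the first bullet makes no assumption on $f(0)$. The proofs in \cite{R-S-96,R-S1} avoid both pitfalls by arguing via holomorphic fixed-point theorems (of Earle--Hamilton type) for maps that are strict contractions in the hyperbolic metric of~$\B$: this yields existence and uniqueness of $G_\lambda(x)$ simultaneously, with no monotonicity hypothesis, no compactness, and no restriction on $f(0)$.
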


The set of all holomorphically accretive mappings having an isolated null at the origin is often denoted by $\N_0$ (see, for example,  \cite{G-K}). Hence the following clarification is natural.

\begin{definition}\label{def-dissip}
A mapping $f\in\Hol(\B,X),\ f(0)=0,$ is said to be holomorphically accretive if
\begin{equation}\label{a-condi}
  \Re\langle f(x),x^*  \rangle \ge a\|x\|^2 \quad \mbox{ for all }\quad  x\in\B \quad \mbox{ and } \quad  x^*\in J(x)
\end{equation}
 for some $a\ge0$. We denote the class of mappings that satisfy \eqref{a-condi} with a given $a\ge0$ by $\N_a$. 
 %
 \end{definition}
 Note that for any  holomorphically accretive 
 mapping~$f$, the number~$a$ can be chosen to be zero. Whence $a>0$ the mapping $f$ is named {\it strongly holomorphically accretive
 }. To explain our interest in the classes $\N_a$ with $a>0$, recall that if the semigroup generated by $f\in\N_0$ contains neither an elliptic automorphism, nor the identity mapping, then it converges to zero uniformly on any ball $\B_r,\, r<1$. It is reasonable to inquire  whether this convergence is uniform on the {\it whole ball~$\B$}. The following statement answers this question.         
 \begin{proposition}\label{propo-sque}
   Let $f$ be a holomorphically accretive mapping on $\B$ and $\left\{u(t,\cdot) \right\}_{t\ge0}$ be a semigroup generated by $f$.
   \begin{itemize}
     \item  [(i)] If $f\in\N_a,\ a>0,$ then $\left\{u(t,\cdot) \right\}_{t\ge0}$ converges to zero uniformly on $\B$ with the squeezing ratio $\kappa=a$, which means that
    \begin{equation}\label{squee}
      \left\| u(t,x) \right\| \le e^{-at} \|x\|\quad \mbox{for all }\ x\in\B.
    \end{equation}
     \item [(ii)] If $X$ is a Hilbert space and the semigroup $\left\{u(t,\cdot) \right\}_{t\ge0}$ satisfies inequality \eqref{squee} for some $a>0$, then $f\in\N_a$.
   \end{itemize}
 \end{proposition}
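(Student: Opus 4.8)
The plan is to reduce both directions to tracking the scalar quantity $\|u(t,x)\|$ along trajectories of the semigroup and comparing it with the exponential $e^{-at}\|x\|$. For part~(i) this means deriving a differential inequality for the norm of the trajectory from the accretivity condition \eqref{a-condi}; for part~(ii) it means differentiating the assumed estimate \eqref{squee} at $t=0$ and reading off \eqref{a-condi}.

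For part~(i), I would fix $x\in\B$ and write $u(t):=u(t,x)$, which stays in $\B$ for all $t\ge0$ and satisfies $u'(t)=-f(u(t))$ by \eqref{semig}. Since the norm of a Banach space need not be differentiable, the plan is to work with the right (upper Dini) derivative $\frac{d^+}{dt}\|u(t)\|$ and invoke the standard support-functional formula
\[
\|u(t)\|\cdot\frac{d^+}{dt}\|u(t)\|=\max_{x^*\in J(u(t))}\Re\left\langle u'(t),x^*\right\rangle,
\]
valid whenever $u(t)\neq0$. Substituting $u'(t)=-f(u(t))$ turns the maximum into $-\min_{x^*}\Re\langle f(u(t)),x^*\rangle$, and the accretivity hypothesis \eqref{a-condi}, which holds for \emph{every} $x^*\in J(u(t))$, gives $\min_{x^*}\Re\langle f(u(t)),x^*\rangle\ge a\|u(t)\|^2$. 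Hence
\[
\frac{d^+}{dt}\|u(t)\|\le -a\|u(t)\|\qquad\text{while } u(t)\neq0.
\]
A comparison argument for this one-sided differential inequality (equivalently, observing that $t\mapsto e^{at}\|u(t)\|$ is non-increasing) then yields $\|u(t)\|\le e^{-at}\|x\|$, which is exactly \eqref{squee}. Since $\|x\|<1$, the right-hand side tends to $0$ uniformly in $x$ as $t\to\infty$, proving uniform convergence on all of~$\B$.

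For part~(ii), the Hilbert-space structure makes the support set single-valued: $J(x)$ is identified via Riesz with $\{x\}$, so \eqref{a-condi} reads $\Re\langle f(x),x\rangle\ge a\|x\|^2$. Here I would use that $g(t):=\|u(t,x)\|^2$ is genuinely differentiable in $t$, with $g(0)=\|x\|^2$ and $g'(0)=2\Re\langle u'(0),u(0)\rangle=-2\Re\langle f(x),x\rangle$. The assumed estimate \eqref{squee} gives $g(t)\le e^{-2at}\|x\|^2=:h(t)$ for all $t\ge0$, with equality at $t=0$. Forming the difference quotients $\frac{g(t)-g(0)}{t}\le\frac{h(t)-h(0)}{t}$ and letting $t\to0^+$ yields $g'(0)\le h'(0)=-2a\|x\|^2$, that is, $-2\Re\langle f(x),x\rangle\le -2a\|x\|^2$. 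Rearranging reproduces \eqref{a-condi}, so $f\in\N_a$.

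The main obstacle is the norm differentiation in part~(i): in a general Banach space the norm is not differentiable and the support functional is not unique, so one must pass to one-sided Dini derivatives and carefully justify both the support-functional formula and the ensuing comparison principle. Once that machinery is available, the condition \eqref{a-condi} feeds in directly, precisely because it is assumed for all $x^*\in J(u(t))$, and what remains is a routine Gronwall-type estimate. Part~(ii) is comparatively soft; the only point requiring attention is the legitimacy of differentiating $g$ at $t=0$, which is guaranteed by the differentiability of the trajectory coming from the Cauchy problem~\eqref{semig}.
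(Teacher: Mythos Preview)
Your argument is sound. The Dini-derivative/support-functional approach for~(i) is the standard route (essentially Kato's lemma for the norm along a $C^1$ curve, combined with a Gronwall comparison), and your treatment of~(ii) via differentiating $t\mapsto\|u(t,x)\|^2$ at $t=0$ in the Hilbert setting is correct. The only minor point worth tightening is the case where the trajectory might hit the origin: if $u(t_0)=0$, then by $f(0)=0$ and the semigroup property $u(t)=0$ for all $t\ge t_0$, so the inequality \eqref{squee} holds trivially thereafter; on $[0,t_0)$ your differential inequality applies.

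Compared with the paper: the paper does not actually prove Proposition~\ref{propo-sque}. It simply records that assertion~(i) is a special case of \cite[Lemma~3.3.2]{E-R-S-04} (taking $\alpha(s)=as$ there) and that assertion~(ii) is established in \cite{E-R-S-02}. Your write-up supplies the direct argument that those references contain, so in substance you are reproducing the cited proofs rather than offering a genuinely different route. What you gain is self-containment; what the paper gains by citing is brevity and avoidance of the Banach-space norm-differentiation technicalities you correctly flag as the main obstacle.
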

 Assertion (i) follows from \cite[Lemma~3.3.2]{E-R-S-04}, where we set $\alpha(s)=as$, while assertion (ii) is proved in \cite{E-R-S-02}, see also \cite{E-R-Sbook, R-S1}. 
 \vspace{2mm}

\subsection{Starlike mappings
}
Holomorphically accretive mappings play an essential role not only in dynamical systems, but also in geometric function theory (the  reader is referred to \cite{E-R-Sbook, G-K, R-S1}).  

Following \cite{G-K}, see also \cite{E-R-S-04, HKL, KL, STJ-77}, we recall several notions. A mapping $f\in\Hol(D,D_1)$ is said to be biholomorphic  if it is  invertible and $f^{-1}\in\Hol(D_1,D)$.

\begin{definition}\label{def-star}
  Let  $f\in\Hol(\B,X),\ f(0)=0,$ be a biholomorphic mapping. It is called starlike if
  \begin{equation}\label{starl}
\Re  \left\langle   \left( f'(x) \right)^{-1} f(x), x^* \right\rangle\ge0
\end{equation}
for all $x\in\B,\ x^*\in J(x)$. Moreover, the mapping $f$ is called starlike of order $\gamma\in(0,1]$ if it satisfies
\begin{equation}\label{star-order}
  \left|   \frac{\left\langle   \left(f'(x) \right)^{-1} f(x), x^* \right\rangle}{\|x\|^2} - \frac{1}{2\gamma} \right| \le \frac{1}{2\gamma},\qquad x\in\B\setminus\{0\}.
\end{equation}
It is called strongly starlike of order $\beta\in[0,1]$ if 
\begin{equation}\label{str-star-order}
  \left| \arg \left\langle  \left( f'(x) \right)^{-1} f(x), x^* \right\rangle \right| \le \frac{\pi\beta}{2},\qquad x\in\B.
\end{equation}
\end{definition}
In the one-dimensional case these definitions coincide with classical ones, see, for example, books \cite{Dur, E-S-book, G-K}, where more details can be found. Starlike mappings of order $\gamma=0$ (as well as strongly starlike of order $\beta=1$) are just starlike, while the only mappings starlike of order $\gamma=1$ (strongly starlike of order $\beta=0$) are linear ones.
Notice that the normalization $f'(0)=\Id$ is usually imposed. Since the relations~\eqref{starl}--\eqref{str-star-order} are invariant under the transformation $f\mapsto Bf$, where $B$ is an invertible bounded linear operator, we conclude that the normalization is unnecessary.

\medskip

\section{The class $\widehat{\Hol}(\B,X)$}
\label{sect-onedim}
\setcounter{equation}{0}



In this paper we focus on a class of holomorphic mappings which in the one-dimensional case coincides with the class of all holomorphic functions in the open unit disk vanishing at zero.  

\begin{definition}\label{def-odt}
  We write  $f\in \widehat{\Hol}(\B,X)$ if $f$ has the form $f(x)=p(x)x,$  where $p\in\Hol(\B,\C)$, that is,  for every point $x\in\B$, the vectors $x$ and $f(x)$ are $\C$-proportional.   
  We also denote $\widehat{\N}_a:=\N_a\cap \widehat{\Hol}(\B,X)$  and  $\widehat{\Hol}(\B):=\Hol(\B)\cap  \widehat{\Hol}(\B,X)$.
\end{definition}


Writing $f\in\widehat{\Hol}(\B,X)$ in the above form $f(x)=p(x)x$, we get $\langle f(x),x^*\rangle=  \|x\|^2 p(x)$ and $f'(0)=p(0)\Id$. 
Hence $f\in\widehat{\N}_a$, see Definition~\ref{def-dissip}, if and only if $\Re p(x)\ge a,\ x\in\B$. In the next proposition we collect some properties of such functions $p$. We provide their proof for the sake of completeness. Alternatively, they can be obtained from well-known one-dimensional results (cf. \cite{Dur, E-R-Sbook, G-K}).
 
  \begin{proposition} \label{prop-ineq}
   Let $p\in\Hol(\B,\C),$  $q=p(0)$ and $a\ge0$. The following assertions are equivalent:
    \begin{itemize}
      \item [(i)] $\Re p(x)\ge a$ for all $x\in\B$;

     \item [(ii)] (the Riesz--Herglotz formula) for every $u\in\partial\B$ there is a probability measure $\mu=\mu_u$ on the unit circle 
         such that for all $z\in\D$
         \begin{equation*}
p(zu)=\int_{|\zeta|=1} \frac{(1+z\overline{\zeta})(\Re q-a)}{1-z\overline{\zeta}}\,d\mu(\zeta)+a+i\gamma,\quad \gamma=\Im q;
\end{equation*}

      \item [(iii)] for every $x\in\B$ the value $p(x)$ lies in the closed disk centered at the point $\displaystyle c(x):= \frac{p(0) +\|x\|^2 \overline{p(0) } - 2a\|x\|^2}{1-\|x\|^2} $ and of radius $\displaystyle r(x):=\frac{2\|x\|\left( \Re p(0)  -a \right)}{1-\|x\|^2}$;

      \item [(iv)]  (Harnack's type inequality)  for every $x\in\B$ the following inequality holds
   \[
 \frac{(1-\|x\|)\Re p(0) + 2a\|x\|}{1+\|x\|}  \le \Re p(x) \le  \frac{(1+\|x\|) \Re p(0)  - 2a\|x\|  }{1-\|x\|}  .
   \]
    \end{itemize}
 \end{proposition}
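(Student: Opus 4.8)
The plan is to reduce everything to the classical one-dimensional theory of holomorphic functions with nonnegative real part by slicing $\B$ along complex lines through the origin. Fix a direction $u\in\partial\B$ and set $g_u(z):=p(zu)$ for $z\in\D$; then $g_u\in\Hol(\D,\C)$ and $g_u(0)=q$. All four assertions are ``direction-by-direction'' statements: for a given $x\in\B\setminus\{0\}$ one writes $x=zu$ with $u=x/\|x\|\in\partial\B$ and $z=\|x\|\in(0,1)$, so that $p(x)=g_u(z)$ and $\|x\|=|z|$. I would prove the implications in the cyclic order (i) $\Rightarrow$ (ii) $\Rightarrow$ (iii) $\Rightarrow$ (iv) $\Rightarrow$ (i), each reduced to the corresponding slice statement on $\D$.

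For (i) $\Rightarrow$ (ii), assume first $\Re q>a$ and normalize: put $h_u(z):=\dfrac{g_u(z)-a-i\gamma}{\Re q-a}$, $\gamma=\Im q$. Then $h_u(0)=1$ and, by (i), $\Re h_u\ge0$ on $\D$, so $h_u$ lies in the Carath\'eodory class and admits the classical Riesz--Herglotz representation $h_u(z)=\int_{|\zeta|=1}\frac{1+z\overline\zeta}{1-z\overline\zeta}\,d\mu_u(\zeta)$ with a probability measure $\mu_u$. Multiplying back by $\Re q-a$ and adding $a+i\gamma$ yields exactly the formula in (ii). The degenerate case $\Re q=a$ is handled separately: then $\Re p$ attains its global minimum $a$ at the interior point $0$, so by the minimum principle for harmonic functions $\Re p\equiv a$ and $p\equiv a+i\gamma$; here the integral term in (ii) vanishes (its factor is $\Re q-a=0$) and any probability measure works.

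For (ii) $\Rightarrow$ (iii) $\Rightarrow$ (iv) the core computation is the range of the Herglotz kernel. For fixed $z$ with $|z|=r$, as $\zeta$ runs over the unit circle the point $t=z\overline\zeta$ runs over the circle $|t|=r$, and the M\"{o}bius map $t\mapsto\frac{1+t}{1-t}$ carries it onto the circle centered at $\frac{1+r^2}{1-r^2}$ of radius $\frac{2r}{1-r^2}$. Since $h_u(z)$ is an average of such kernel values against the probability measure $\mu_u$, it lies in the corresponding closed disk. Applying the affine map $w\mapsto(\Re q-a)w+a+i\gamma$ (i.e.\ undoing the normalization) sends this disk to the disk with center $c(x)$ and radius $r(x)$ listed in (iii); this is a short algebraic identification after substituting $r=\|x\|$, $q=\Re q+i\gamma$, $\overline q=\Re q-i\gamma$. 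Taking real parts, the constraint that $p(x)$ lies in that disk gives $\Re c(x)-r(x)\le\Re p(x)\le\Re c(x)+r(x)$, and since each numerator factors through $(1\pm\|x\|)$ these collapse precisely to the two bounds of (iv).

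Finally, (iv) $\Rightarrow$ (i): for any fixed $x$ with $r=\|x\|\in(0,1)$ both bounds hold, so the lower one cannot exceed the upper one; clearing the (positive) denominators forces $4r\Re q\ge 4ar$, i.e.\ $\Re q\ge a$, whereupon the lower bound $\frac{(1-\|x\|)\Re q+2a\|x\|}{1+\|x\|}$ is itself $\ge a$, which is exactly $\Re p(x)\ge a$. I expect the only real obstacle to be bookkeeping rather than conceptual: keeping the Herglotz measure $\mu_u$ direction-dependent, treating the degenerate slice $\Re q=a$ via the minimum principle, and carrying out the affine identification of the one-dimensional disk $\bigl(\frac{1+r^2}{1-r^2},\frac{2r}{1-r^2}\bigr)$ with the stated $\bigl(c(x),r(x)\bigr)$.
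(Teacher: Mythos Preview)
Your proof follows essentially the same route as the paper: slice along complex lines through the origin, normalize to the Carath\'eodory class, invoke the classical Riesz--Herglotz representation, and cycle through (i) $\Rightarrow$ (ii) $\Rightarrow$ (iii) $\Rightarrow$ (iv) $\Rightarrow$ (i). Your execution is slightly more careful in two spots---you treat the degenerate case $\Re q=a$ explicitly (the paper tacitly assumes $\Re q>a$ when dividing by $\Re q-a$), and for (iv) $\Rightarrow$ (i) you give a direct algebraic argument where the paper instead passes to the limit $\|x\|\to1^-$ in the lower bound---but these are refinements of the same proof, not a different approach.
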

 
 \begin{proof}
    Let  $\Re p(x)\ge a$ in $\B$. Fix arbitrary $u\in\partial\B$ and consider the function $\widetilde{p}\in\Hol(\D,\C)$ defined by
    \begin{equation}\label{q}
    \widetilde{p}(z):=  \frac{p(zu)-a-i\gamma }{\Re q-a}\,,\qquad \gamma=\Im q.
    \end{equation}
It has positive real part in the open unit disk $\D$ and satisfies $ \widetilde{p}(0)=1$. Therefore by the classical Riesz--Herglotz formula  there is a probability measure $\mu$ on the unit circle such that
\[
\widetilde{p}(z)= \int_{|\zeta|=1} \frac{1+z\overline{\zeta}}{1-z\overline{\zeta}}\,d\mu(\zeta).
\]
This representation is equivalent to assertion (ii).

Assume now that assertion (ii) holds. For $x\in\B$ we set $u=\frac{1}{\|x\|}x$ and define $c(x)$ and $r(x)$ as in assertion (iii). Then
\begin{eqnarray*}
  p(x) - c(x) &=& p(\|x\|u) - c(x) \\ 
                    &=& \int_{|\zeta|=1}  \frac{(1+\|x\|\overline{\zeta})(\Re q-a)}{1-\|x\|\overline{\zeta}}\, d\mu(\zeta)+a+i\gamma\\
                  &-& \frac1{1-\|x\|^2} \left( p(0) +\|x\|^2 \overline{p(0)} - 2a\|x\|^2 \right) \\
                  &=& (\Re q-a) \int_{|\zeta|=1} \left[\frac{1+\|x\|\overline{\zeta}}{1-\|x\|\overline{\zeta}} -\frac{1+\|x\|^2}{1-\|x\|^2} \right]d\mu(\zeta)\\
                  &=& (\Re q-a) \int_{|\zeta|=1} \frac{2(\|x\|\overline{\zeta}-\|x\|^2)}{(1-\|x\|\overline{\zeta})(1-\|x\|^2)} d\mu(\zeta) .
\end{eqnarray*}
Hence 
\begin{eqnarray*}
  \left|  p(x) - c(x) \right| &\le & (\Re q-a) \int_{|\zeta|=1}  \frac{2\|x\|\left|\overline{\zeta}-\|x\|\right|}{\left|1-\|x\|\overline{\zeta}\right|(1-\|x\|^2)}d\mu(\zeta) \\
   &= & (\Re q-a) \int_{|\zeta|=1}  \frac{2\|x\|}{1-\|x\|^2}d\mu(\zeta) =r(x),
\end{eqnarray*}
that is, (iii) holds.

On the other hand, if assertion (iii) holds, then
\[
\Re c(x) -r(x) \le \Re (p(x)) \le  \Re c(x) + r(x),
\]
which gives assertion (iv).

Assume now that assertion (iv) holds. The using the maximum principle leads to
\[
\Re p(x) \ge \lim_{\|x|\to1^-} \left(  \Re p(0) \frac{1-\|x\|}{1+\|x\|} +\frac{2a\|x\|}{1+\|x\|} \right)  =a,
\]
that is, assertion (i) holds. The proof is complete.
   \end{proof}
   
   Our next result is a refinement of Proposition~\ref{lemma1} for the class $\widehat{\N}_0$.

\begin{proposition}\label{lemma2a}
Let $f\in \widehat\N_0$, that is, $f(x)=p(x)x,$ and let $\{G_{\lambda }\}_{\lambda\ge0}\in \Hol(\B)$ be its resolvent family.~Then
\begin{itemize}
    \item [(a)] $\{G_{\lambda }\}_{\lambda > 0}\subset \widehat{\Hol}(\B)$, that is, for every $\lambda\ge0$ there is $g_\lambda\in\Hol(\B,\C)$ such that $G_\lambda(x)=g_\lambda(x)x$.

    \item [(b)] for every $u\in\partial\B$, the function $\widetilde{f}_u\in\Hol(\D,\C)$ defined by $\widetilde{f}_u(z):= p(zu) z$ is holomorphically accretive on the open unit disk $\D$. The resolvent family of $\widetilde{f}_u$ is $\{g_\lambda(\cdot u)\cdot\}_{\lambda > 0}$.
\end{itemize}
\end{proposition}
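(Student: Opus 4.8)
The plan is to exploit the special form $f(x)=p(x)x$ directly in the resolvent equation~\eqref{G*}. Since $f\in\widehat\N_0$ is holomorphically accretive, Proposition~\ref{lemma1} guarantees that it satisfies the range condition, so for each $\lambda>0$ the resolvent $G_\lambda=(\Id+\lambda f)^{-1}\in\Hol(\B)$ is well defined and maps $\B$ into itself. For a fixed $x\in\B$, write $w=G_\lambda(x)$. The resolvent equation $w+\lambda f(w)=x$ then reads $w+\lambda p(w)w=x$, that is, $(1+\lambda p(w))\,w=x$. Because $\Re p\ge0$ on $\B$ and $\lambda>0$, we have $\Re(1+\lambda p(w))\ge1$, so the scalar factor never vanishes. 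Dividing, $w=\frac{1}{1+\lambda p(w)}\,x$, which already shows that $G_\lambda(x)$ is $\C$-proportional to $x$.

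To turn this into assertion (a) I would set $g_\lambda:=\bigl(1+\lambda\,(p\circ G_\lambda)\bigr)^{-1}$. Since $G_\lambda$ maps $\B$ holomorphically into $\B$ and $p\in\Hol(\B,\C)$, the composition $p\circ G_\lambda$ is holomorphic on $\B$; the denominator has real part at least $1$ and hence is a non-vanishing holomorphic function, so $g_\lambda\in\Hol(\B,\C)$ (in particular the origin causes no trouble, where $g_\lambda(0)=(1+\lambda q)^{-1}$). Rewriting the resolvent equation as $(1+\lambda p(G_\lambda(x)))G_\lambda(x)=x$ gives exactly $G_\lambda(x)=g_\lambda(x)\,x$ for every $x\in\B$, which is (a).

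For assertion (b), fix $u\in\partial\B$ and recall that in the one-dimensional ball $\D$ the support set is $J(z)=\{\overline z\}$, so holomorphic accretivity of $\widetilde f_u$ amounts to $\Re(\widetilde f_u(z)\,\overline z)\ge0$ on $\D$. A one-line computation, $\widetilde f_u(z)\,\overline z=p(zu)\,|z|^2$, together with $\Re p\ge0$ on $\B$ and $\|zu\|=|z|<1$, yields $\Re(\widetilde f_u(z)\overline z)=|z|^2\Re p(zu)\ge0$; thus $\widetilde f_u$ is holomorphically accretive and, by Proposition~\ref{lemma1}, possesses a resolvent family on $\D$. To identify it I would substitute: putting $\zeta:=g_\lambda(zu)\,z\in\D$, part (a) gives $G_\lambda(zu)=g_\lambda(zu)\,(zu)=\zeta u$, and feeding this into the range-condition identity $G_\lambda(zu)+\lambda f(G_\lambda(zu))=zu$ produces $(\zeta+\lambda p(\zeta u)\zeta)\,u=z\,u$. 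Cancelling the nonzero vector $u$ leaves $\zeta+\lambda\,\widetilde f_u(\zeta)=z$, i.e.\ $\zeta$ solves the scalar resolvent equation for $\widetilde f_u$; by the uniqueness part of Definition~\ref{def-range_cond} the resolvent of $\widetilde f_u$ is precisely $z\mapsto g_\lambda(zu)\,z$, as claimed.

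The computations here are short; the step that needs the most care is the last one, where the $X$-valued resolvent equation must be reduced \emph{exactly} to the scalar one. The crucial points are that $f(x)=p(x)x$ lets the common direction $u$ be factored out cleanly, that $\zeta u=G_\lambda(zu)\in\B$ forces $\zeta\in\D$ so the reduced equation lives in the right domain, and that uniqueness of the solution (guaranteed because $\widetilde f_u$ satisfies the range condition) is what upgrades ``$\zeta$ solves the equation'' to ``$\zeta$ is the resolvent''. No genuine obstacle arises beyond keeping these bookkeeping points straight.
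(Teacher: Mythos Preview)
Your proposal is correct and follows essentially the same route as the paper: rewrite the resolvent equation as $(1+\lambda p(G_\lambda(x)))G_\lambda(x)=x$ to obtain proportionality and hence $G_\lambda(x)=g_\lambda(x)x$, then restrict to the complex line $z\mapsto zu$ to reduce to the scalar resolvent equation for $\widetilde f_u$. The only difference is cosmetic: for part~(b) the paper first derives the scalar resolvent equation and then invokes Proposition~\ref{lemma1} to conclude accretivity, whereas you verify accretivity of $\widetilde f_u$ directly from $\Re p\ge0$, use Proposition~\ref{lemma1} to obtain the range condition (and hence uniqueness), and then identify the resolvent---your ordering is slightly cleaner since it makes the appeal to uniqueness explicit.
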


\begin{proof}
First we note that the discussion before Proposition~\ref{prop-ineq} provides $\Re p(x)\ge0,\ x\in\B.$ Further, by Proposition~\ref{lemma1}, $f$ satisfies the range condition. For every resolvent $G_\lambda$ we have
  \[
  G_\lambda(x) + \lambda p(G_\lambda(x))G_\lambda(x)=x
  \]
  or, equivalently, $ \left[1 +\lambda p(G_\lambda(x))\right] G_\lambda(x)=x$, so $x$ and $G_\lambda(x)$ are proportional, that is, $G_\lambda\in\widehat{\Hol}(\B)$. Thus for every fixed $\lambda>0$ there is a function $g_\lambda\in\Hol(\B,\C)$ such that $G_\lambda(x)=g_\lambda(x)x$. Substitute this relation in the resolvent equation and get:
  \[
  g_\lambda(x)x+\lambda p\left(g_\lambda(x)x\right)g_\lambda(x)x=x.
  \]
  Take $x=zu$ with $z\in\D$ and $u\in\partial\B$:
  \[
  g_\lambda(zu)z + \lambda p\left(g_\lambda(zu)zu\right) g_\lambda(zu)z=z.
  \]
  The last equality means that the function $\widetilde{f}_u: z\mapsto  p(zu) z$ satisfies the range condition on $\D$ and $g_\lambda(zu)z$ is its resolvent. The usage of Proposition~\ref{lemma1} completes the proof.
\end{proof}

At the end of this section we present a connection between starlike mappings of the class $\widehat{\Hol}(\B,X)$ and one-dimensional starlike functions (cf. \cite{Bav, D-L-14, Lic-86}). 
\begin{proposition}\label{prop-star-1d}
Let $f\in\widehat{\Hol}(\B,X)$, that is, $f(x)=p(x)x,$ where $p\in\Hol(\B,\C).$  Then $f$ is a starlike mapping of order $\gamma\in[0,1)$ (respectively, strongly starlike of order $\beta\in[0,1]$) if and only if for every $u\in\partial\B$,  the function $h\in\Hol(\D,\C)$ defined by $h(z):=p(zu)z$ is starlike of order $\gamma$ (respectively, strongly starlike of order $\beta$) in  $\D$.
\end{proposition}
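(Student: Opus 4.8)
The plan is to reduce both the multidimensional starlikeness conditions and their one–dimensional counterparts to a single scalar quantity and then to identify these quantities through the slice $x=zu$. The whole proposition should then fall out of one algebraic identity together with a surjectivity observation.

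First I would compute the Fréchet derivative. For $f(x)=p(x)x$ and $v\in X$,
\[
f'(x)v = p(x)v + \left(p'(x)v\right)x ,
\]
so $f'(x)$ is a rank-one perturbation of the scalar operator $p(x)\Id$. Solving $f'(x)w=f(x)=p(x)x$ with the ansatz $w=c\,x$ and using $p'(x)w=c\,p'(x)x$ gives
\[
\left(f'(x)\right)^{-1}f(x) = \frac{p(x)}{p(x)+p'(x)x}\,x .
\]
The essential structural fact here, special to $\widehat{\Hol}(\B,X)$, is that this vector is always $\C$-proportional to $x$. Consequently, for every $x^*\in J(x)$, using $\langle x,x^*\rangle=\|x\|^2$,
\[
\Phi(x) := \frac{\left\langle \left(f'(x)\right)^{-1}f(x), x^*\right\rangle}{\|x\|^2} = \frac{p(x)}{p(x)+p'(x)x},
\]
which in particular does not depend on the choice of $x^*\in J(x)$. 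The analogous one-dimensional computation is elementary: for $h_u(z):=p(zu)z$ the chain rule gives
\[
h_u'(z) = p(zu) + z\left(p'(zu)u\right), \qquad \psi_u(z):=\frac{h_u(z)}{z\,h_u'(z)} = \frac{p(zu)}{p(zu)+z\left(p'(zu)u\right)} .
\]
Since $p'(x)x=p'(zu)(zu)=z\left(p'(zu)u\right)$ by linearity of the derivative, substituting $x=zu$ yields the key identity
\[
\Phi(zu)=\psi_u(z), \qquad u\in\partial\B,\ z\in\D\setminus\{0\}.
\]

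It then remains to translate each geometric condition into a membership statement and transfer it across this identity. Recalling Definition~\ref{def-star} and its one-dimensional specialization (with $J(z)=\{\overline{z}\}$ in $\D$), starlikeness of order $\gamma$ of $f$, respectively of $h_u$, is equivalent to $\Phi(x)$, respectively $\psi_u(z)$, lying in the disk $\{\zeta\in\C:\ |\zeta-\tfrac{1}{2\gamma}|\le\tfrac{1}{2\gamma}\}$ for $\gamma\in(0,1)$, or in the half-plane $\{\Re\zeta\ge0\}$ for $\gamma=0$; strong starlikeness of order $\beta$ corresponds to the sector $\{\zeta:\ |\arg\zeta|\le\tfrac{\pi\beta}{2}\}$. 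Because the map $(u,z)\mapsto zu$ sends $\partial\B\times(\D\setminus\{0\})$ onto $\B\setminus\{0\}$, and conversely every $x\in\B\setminus\{0\}$ equals $zu$ with $u=x/\|x\|$ and $z=\|x\|$, the identity $\Phi(zu)=\psi_u(z)$ shows that $\Phi$ takes values in a prescribed region on all of $\B\setminus\{0\}$ if and only if each $\psi_u$ does so on $\D\setminus\{0\}$. This delivers the claimed equivalence for both notions and all admissible orders simultaneously.

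The point requiring care is not the algebra but the bookkeeping of biholomorphy, which is built into Definition~\ref{def-star}; this is where I expect the only real friction. I would handle it through the clean slice relation $f(zu)=h_u(z)\,u$: injectivity of $f$ on the complex line $\C u\cap\B$ is equivalent to univalence of $h_u$, while invertibility of $f'(x)$ at $x=zu$ amounts to the non-vanishing of $p(zu)$ and of $p(zu)+z\left(p'(zu)u\right)=h_u'(z)$. Moreover, any two points with $f(x_1)=f(x_2)\neq 0$ are automatically $\C$-proportional, both being multiples of the same vector, so global injectivity of $f$ follows from univalence of all the slices $h_u$, and vice versa. This reconciles the biholomorphy requirement on the two sides and completes the argument.
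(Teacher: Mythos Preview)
Your proof is correct and follows essentially the same approach as the paper's: both compute $\left(f'(x)\right)^{-1}f(x)=\dfrac{p(x)}{p(x)+p'(x)x}\,x$, observe that for $h(z)=p(zu)z$ one has $\dfrac{h(z)}{zh'(z)}=\dfrac{p(zu)}{p(zu)+p'(zu)zu}$, and conclude via the slice identity $\Phi(zu)=\psi_u(z)$. Your version is in fact more thorough than the paper's, which stops at the identity and invokes Definition~\ref{def-star}; you additionally spell out the surjectivity of the slicing and, more importantly, the biholomorphy bookkeeping (injectivity of $f$ from univalence of all $h_u$ via the proportionality observation, and invertibility of $f'(x)$ from $p(x)\neq0$ and $h_u'(z)\neq0$), which the paper leaves implicit.
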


\begin{proof}
  First we note that $f'(x)y=\left( p'(x)y\right)x +p(x)y$ for any ${y\in X}$.  This relation implies $\left( f'(x)\right)^{-1}f(x)=\frac{p(x)}{p'(x)x+p(x)}\,x$.
 Also, fix $u\in\partial\B$ and consider $h(z)=p(zu)z$. It satisfies $\frac{h(z)}{zh'(z)}=\frac{p(zu)}{p'(zu)zu+p(zu)}$. Hence,
 \[
\left.  \frac{\left\langle   \left( f'(x) \right)^{-1} f(x), x^* \right\rangle}{\|x\|^2} \right|_{x=zu} = \frac{h(z)}{zh'(z)}
 \]
 for any $z\in\D$, $z\neq0$. Thus, the result follows from Definition~\ref{def-star}.  
\end{proof}

\medskip

\section{Distortion and accretivity of resolvents}\label{sect-resolv-1}

\setcounter{equation}{0}

In this section we prove the distortion theorem for nonlinear resolvents.  Difference between the approach used here and another one utilized in~\cite{E-24a, E-J-resolv} is that instead of the inverse function theorem developed in \cite{E-24a, E-S-2020a}, 
our proof will rely on Proposition~\ref{prop-ineq} above. 
 
 \begin{theorem}\label{thm-reso-general-1}
  Let $f\in\widehat\N_a$ for some $a\ge0$, that is, $f(x)=p(x)x$,  where $\Re p(x) \ge a,$  $x\in\B$. Let $\{G_\lambda\}_{\lambda>0}$ be the resolvent family for~$f$. Denote $q=p(0)$. Then for any $\lambda>0$ we have 
   \begin{equation}\label{ineq-reso2}
   \left\|  G_\lambda(x)  \right\|  \le  \left(\frac{2}{A+\sqrt{B}}\right)^{\frac12} \|x\|,\qquad   x\in\B,
   \end{equation}
where  $A:=\left| 1 - \lambda q  \right|^2+4\lambda a+1$ and $B:=\left(|1-\lambda q|^2-1\right)^2 +8\lambda^3a|q|^2$. 
\end{theorem}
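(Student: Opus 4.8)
The plan is to exploit the scalar structure of $\widehat{\N}_a$ together with the disk estimate of Proposition~\ref{prop-ineq}(iii). Writing $w=G_\lambda(x)$, the resolvent equation \eqref{G*} reads $\bigl(1+\lambda p(w)\bigr)w=x$; since $1+\lambda p(w)\in\C$ this yields the exact norm identity
\[
\|x\|^2=\bigl|1+\lambda p(w)\bigr|^2\,\|w\|^2 .
\]
Thus controlling $\|G_\lambda(x)\|/\|x\|$ amounts to bounding $|1+\lambda p(w)|$ from below. As $w\in\B$, Proposition~\ref{prop-ineq}(iii) confines $p(w)$ to the closed disk centered at $c(w)$ with radius $r(w)$, both depending only on $\rho:=\|w\|$; hence
\[
\bigl|1+\lambda p(w)\bigr|\ \ge\ |1+\lambda c(w)|-\lambda r(w)=:\sqrt{m(\rho)},
\]
and so $\|x\|^2\ge m(\rho)\,\rho^2$. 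First I would dispose of $x=0$ and then insert $q=b+i\gamma$ (with $b=\Re q$) into the formulas for $c,r$ to make $m(\rho)$ explicit.

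The essential subtlety is that $m$ is decreasing with $\inf_\rho m(\rho)$ strictly below the target constant, so naive minimization of $m$ over $\rho$ is too weak; the constraint $\|x\|<1$ must enter. Accordingly I would argue by contradiction: suppose $\rho^2>\tfrac{2}{A+\sqrt B}\,\|x\|^2$. Combined with $\|x\|^2\ge m(\rho)\rho^2$ this forces $m(\rho)<\tfrac{A+\sqrt B}{2}$, and the goal becomes to show that this already implies $m(\rho)\rho^2\ge1$, whence $\|x\|^2\ge1$, contradicting $x\in\B$.

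To carry this out I would analyze the threshold equation $m(\rho)\rho^2=1$. Clearing the square root in $\sqrt{m(\rho)}$ and setting $s=\rho^2$ turns it into a cubic in $s$; using $b\ge a$ one checks that $s=1$ is always a root, and factoring it out leaves a quadratic $\mathcal A s^2+(\mathcal A+\mathcal B)s+1=0$ with $\mathcal A=(1-\lambda b+2\lambda a)^2+\lambda^2\gamma^2$. A direct computation identifies its data with the theorem's constants, namely $A=-(\mathcal A+\mathcal B)$ and $B=(\mathcal A+\mathcal B)^2-4\mathcal A$, so the two roots are $\tfrac{2}{A\pm\sqrt B}$. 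The smaller one, $s_1=\tfrac{2}{A+\sqrt B}$, is the threshold $\rho_1^2$, where $m(\rho_1)=\tfrac{A+\sqrt B}{2}$, while the larger satisfies $s_2=\tfrac{2}{A-\sqrt B}\ge1$ precisely because $b\ge a$. Monotonicity of $m$ then gives $m(\rho)\ge\tfrac{A+\sqrt B}{2}$ on $[0,\rho_1)$, and $s_2\ge1$ ensures $m(\rho)\rho^2\ge1$ on $[\rho_1,1)$; together these yield the implication needed to close the contradiction, while the endpoint $(\rho^2,\|x\|^2)=(s_1,1)$ shows the estimate is sharp.

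The main obstacle is twofold. The conceptual point is recognizing that the admissible values of $\rho=\|G_\lambda(x)\|$ are restricted by $\|x\|<1$, so that the bound is governed by the value of $m$ at the threshold $\rho_1$ rather than by $\inf_\rho m$. The technical point is the algebraic identification of $A$ and $B$ with the coefficients and discriminant of the factored quadratic, together with the two sign facts (that $m$ is decreasing, which one verifies from $\sqrt{m(\rho)}=|1+\lambda c(\rho)|-\lambda r(\rho)$, and that $s_2\ge1$) guaranteeing that the threshold is reached first and that $m(\rho)\rho^2\ge1$ beyond it; these computations are routine but lengthy.
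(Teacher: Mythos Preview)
Your plan is correct and shares the paper's core: apply Proposition~\ref{prop-ineq}(iii) to $1+\lambda p(w)$, reduce to the quadratic $\mathcal As^2-As+1$ in $s=\|w\|^2$, and identify its smaller root as $\tfrac{2}{A+\sqrt B}$ (your observation $s_2\ge1$ is precisely what the paper invokes, tersely, when it writes ``take into account that $t\in(0,1)$''). The only genuine divergence is the closing step. The paper inserts $\|x\|<1$ directly into $\|w\|=\|x\|/|P(w)|$ to obtain $\|w\|\,|C(w)|<1+\|w\|R(w)$, solves the resulting quadratic inequality to get $\|G_\lambda(x)\|^2<\tfrac{2}{A+\sqrt B}$ uniformly on~$\B$, and then applies the Schwarz lemma to upgrade this to $\|G_\lambda(x)\|\le\sqrt{\tfrac{2}{A+\sqrt B}}\,\|x\|$. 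Your contradiction argument bypasses the Schwarz lemma but needs the extra input that $m$ is monotone. Both routes are valid; the paper's is shorter because the Schwarz lemma is off-the-shelf in this setting, while yours is more self-contained but obliges you to verify that $\rho\mapsto|1+\lambda c(\rho)|-\lambda r(\rho)$ is decreasing (true, for instance because the value disks in Proposition~\ref{prop-ineq}(iii) are nested in~$\rho$, but still an additional check). Note also that the degenerate case $\mathcal A=0$ (i.e.\ $q\in\mathbb R$, $q>2a$, $\lambda=1/(q-2a)$), which the paper treats separately, collapses your quadratic to a linear equation and should be singled out in your write-up as well.
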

  
  \begin{proof}
Given $\lambda>0$, denote $P(w)=1+\lambda p(w)$. Clearly, $\Re P(w)\ge 1+\lambda a,\ w\in\D.$ Then by Proposition~\ref{prop-ineq}, the value $P(w)$ of this function lies in the closed disk centered at the point 
\[
\displaystyle  C(w):= \frac1{1-\|w\|^2} \left[ (1-\|w\|^2)(1+2\lambda a -\lambda\overline{q})+  2\lambda(\Re q- a) \right]
\]
and of radius $\displaystyle R(w):=\frac{2\|w\|\lambda(\Re q-a)}{1-\|w\|^2}$. Since for every $x\in\B$, the value $w=G_\lambda(x)$ satisfies the equation $(1+\lambda p(w))w=x$, one concludes
\[
\|w\|=\frac{\|x\|}{|1+\lambda p(w)|} = \frac{\|x\|}{|P(w)|}< \frac{1}{|C(w)|-R(w)}\,,
\]
or equivalently, $\|w\|\cdot |C(w)| <1+\|w\|R(w)$. We now square this inequality, substitute the above formulas defining $C(w)$ and $R(w)$ inside it and simplify the obtained relation. This leads us to
\begin{equation*}
    \|w\|^2 \left\{  \left| 1+2\lambda a - \lambda q  \right|^2     + \frac{4\lambda^2(\Re q-a)a}{1-\|w\|^2} \right\}  <1.
\end{equation*}
The last inequality is equivalent to
\begin{equation}\label{ineq-z}
     - \|w\|^4\cdot \left| 1+2\lambda a - \lambda q  \right|^2    + \|w\|^2\cdot \left(  \left| 1 - \lambda q  \right|^2 +4\lambda a +1 \right) <1.
 \end{equation}

If $q$ is real and greater than $2a$, then for $\lambda=\frac{1}{q-2a}$  inequality \eqref{ineq-z} implies 
\[
\|w\| <\sqrt{ \frac{1}{ 4\lambda^2a(q-a) +1 }}=\frac{q-2a}{q}  \,.
\]
Hence by the Schwarz lemma $\|w\|\le \frac{q-2a}{q}\,|z|$, which gives \eqref{ineq-reso2} in this case.

Otherwise, $t=\|w\|^2$ satisfies the quadratic inequality 
\[
t^2- t \cdot \left(1 + \frac{4\lambda^2a(\Re q-a) +1}{\left| 1+2\lambda a - \lambda q  \right|^2}\right)  >\frac{-1} {\left|1+2\lambda a - \lambda q  \right|^2}\,.
\]
 Take into account that $t\in(0,1)$ and solve this inequality. We get 
\[
t<\frac{2}{A+\sqrt{B}}
\]
with $A$ and $B$ defined above. The use of the Schwarz lemma again completes the proof.
  \end{proof}  
  
\begin{example}\label{corol-reso-partic}
    If in the hypotheses of Theorem~\ref{thm-reso-general-1}, $\lambda_0=\frac{2\Re q}{|q|^2}$, then  
   \[
   \left\|  G_{\lambda_0}(x)  \right\|   \le\frac{1}{ \sqrt{2\lambda_0 a +1+\lambda_0|q|\sqrt{2\lambda_0 a}}}\, \|x\|,\qquad   x\in\B.
   \] 
   In particular, if $q=p(0)$ is a real number, then $ \left\|  G_{2/q}(x)  \right\|   \le\sqrt{\frac{q}{4a+q}}\,\|x\|$.
\end{example}
  
 In general, it is directly verified that if $a>0$, then $\frac{2}{A+\sqrt{B}}<1$. If $a=0$, Theorem~\ref{thm-reso-general-1}  implies the following consequence.
\begin{corollary}\label{cor-est1}
  If in hypotheses of Theorem~\ref{thm-reso-general-1},  $f\in\widehat\N_0$, then  
  \[
  \left\|G_\lambda(x)\right\| < \left\{ \begin{array}{cc}
                            \|x\| ,  & \mbox{as } \ \lambda\le \frac{2\Re q}{|q|^2} \vspace{2mm} \\
                           \frac{\|x\|}{|1-\lambda q|}\,,   &  \mbox{as }\ \lambda>\frac{2\Re q}{|q|^2} 
                           \end{array}       \right.,\quad z\in\D.
  \] 
In particular, the mapping $G_\lambda,$  $\lambda>\frac{2\Re q}{|q|^2}$, has no boundary fixed~points.
\end{corollary}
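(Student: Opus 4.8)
The plan is to obtain the corollary as a direct specialization of Theorem~\ref{thm-reso-general-1} to the case $a=0$, followed by an elementary case analysis governed by the sign of $|1-\lambda q|^2-1$. Setting $a=0$ in the formulas for $A$ and $B$ collapses them: one gets $A=|1-\lambda q|^2+1$ and $B=\big(|1-\lambda q|^2-1\big)^2$, so that $\sqrt{B}=\big|\,|1-\lambda q|^2-1\,\big|$ and the distortion factor $\big(2/(A+\sqrt B)\big)^{1/2}$ no longer involves a genuine square root of a sum. First I would pin down the threshold by writing $|1-\lambda q|^2-1=\lambda^2|q|^2-2\lambda\Re q=\lambda\big(\lambda|q|^2-2\Re q\big)$; since $\lambda>0$, this quantity is nonpositive exactly when $\lambda\le \frac{2\Re q}{|q|^2}$ and positive otherwise.

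In the first regime $\sqrt B=1-|1-\lambda q|^2$, so $A+\sqrt B=2$ and the factor equals $1$; in the second regime $\sqrt B=|1-\lambda q|^2-1$, so $A+\sqrt B=2|1-\lambda q|^2$ and the factor equals $1/|1-\lambda q|$. Substituting into \eqref{ineq-reso2} produces precisely the two branches in the statement (with the typographical $z\in\D$ read as $x\in\B$). For the strict inequality I would not invoke the final $\le$ of Theorem~\ref{thm-reso-general-1} but rather the strict uniform bound $\|G_\lambda(x)\|^2<2/(A+\sqrt B)$ that appears inside its proof (coming from $\|x\|<1$); since $G_\lambda(0)=0$, rescaling by $M:=\big(2/(A+\sqrt B)\big)^{1/2}$ and applying the Schwarz lemma on $\B$ gives $\|G_\lambda(x)\|\le M\|x\|$, with equality at some $x\neq0$ only when $M^{-1}G_\lambda$ is a linear automorphism, i.e.\ when $g_\lambda$, hence $p$, is constant and $f$ is linear. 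Excluding this degenerate case yields the strict inequalities.

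Finally, for $\lambda>\frac{2\Re q}{|q|^2}$ we have $|1-\lambda q|>1$, so the second branch gives $G_\lambda(\B)\subset \B_M$ with $M=1/|1-\lambda q|<1$; a boundary fixed point $\xi\in\partial\B$ would force $\|G_\lambda(x)\|\to 1$ along some sequence $x\to\xi$, which is impossible once $G_\lambda$ maps into a strictly smaller ball, and the claim follows. The genuinely delicate point here is the strictness discussion rather than the branch computation or the boundary conclusion: equality in the second branch can in fact occur for constant $p$ (for instance $f(x)=qx$ with $\Re q=0$, where both sides equal $\|x\|/\sqrt{1+\lambda^2|q|^2}$), so the honest route is either to restrict to nonlinear $f$ or to read the estimate as $\le$, a distinction that is immaterial for the boundary-fixed-point application.
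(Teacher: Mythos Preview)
Your approach is correct and matches the paper's: the corollary is obtained directly from Theorem~\ref{thm-reso-general-1} by setting $a=0$, which collapses $A+\sqrt{B}$ to $2\max\{1,|1-\lambda q|^2\}$ via the sign of $|1-\lambda q|^2-1=\lambda(\lambda|q|^2-2\Re q)$. Your additional remark on strictness is a fair observation---the $<$ can degenerate to $=$ when $p\equiv q$ with $\Re q=0$---though this edge case does not affect the boundary-fixed-point conclusion, which only needs $\|G_\lambda(x)\|\le M\|x\|$ with $M<1$.
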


\begin{figure}[t]
  \includegraphics[width=7cm, height=5cm]{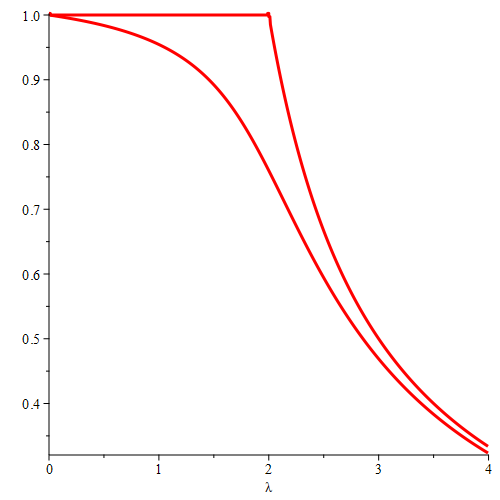}
  \caption{Function $\lambda\mapsto\sqrt{\frac{2}{A+\sqrt{B}}}$ }
  \label{fig:depend-a}
\end{figure} 

In Fig.~\ref{fig:depend-a}, the graph of the function $\lambda\mapsto\sqrt{\frac{2}{A+\sqrt{B}}}$ is presented for the values of the parameters 
$q=1$,  $a=0$ and $q=1$, $a=\frac{1}{40}$, respectively.

\vspace{3mm}

To proceed, recall that by Definition~\ref{def-range_cond},
\[
 G_\lambda(x) + \lambda f(G_\lambda(x)) =x,
\]
hence
\[
 \lambda \left\langle f(G_\lambda(x)), x^*\right\rangle  = \|x\|^2 - \left\langle G_\lambda(x), x^*\right\rangle .
\]
Therefore, estimate~\eqref{ineq-reso2} implies that
\[
 \lambda \Re \left\langle f(G_\lambda(x)), x^*\right\rangle \ge\|x\|^2 \left( 1 -  \left(\frac{2}{A+\sqrt{B}}\right)^{\frac12} \right) .
\]
So, we conclude the following:
\begin{corollary}\label{cor-generator}
 Let $f\in\widehat{\N}_a$ for some $a\ge0$ and $\{G_\lambda\}_{\lambda>0}$ be the resolvent family for~$f$.  Then
 \[
 f\circ G_\lambda \in\N_{a_\lambda},\quad \mbox{ where  }\quad  a_\lambda=\frac1\lambda\left(1-  \left(\frac{2}{A+\sqrt{B}}\right)^{\frac12} \right).
 \]
 \end{corollary}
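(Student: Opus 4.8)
The plan is to extract the accretivity constant directly from the resolvent identity, using the distortion estimate of Theorem~\ref{thm-reso-general-1} as the only substantial ingredient. Before that, I would dispose of the structural requirements for membership in $\N_{a_\lambda}$. The composition $f\circ G_\lambda$ is holomorphic on $\B$, being a composition of two holomorphic mappings, and it fixes the origin: indeed $G_\lambda(0)=0$ follows from $f(0)=0$ together with the uniqueness of the solution of the resolvent equation \eqref{G*}, and then $f(G_\lambda(0))=f(0)=0$. I would also record that $a_\lambda\ge0$, which follows from the inequality $\frac{2}{A+\sqrt B}\le1$ noted just before the corollary, so that \eqref{a-condi} with exponent $a_\lambda$ is a meaningful requirement. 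Thus only the accretivity inequality itself remains to be verified.

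To establish it, I would start from the defining relation $G_\lambda(x)+\lambda f(G_\lambda(x))=x$, apply an arbitrary support functional $x^*\in J(x)$ to both sides, and take real parts to obtain
\[
\lambda\,\Re\left\langle f(G_\lambda(x)),x^*\right\rangle=\|x\|^2-\Re\left\langle G_\lambda(x),x^*\right\rangle .
\]
Everything then reduces to bounding $\Re\langle G_\lambda(x),x^*\rangle$ from above. By the Cauchy--Schwarz inequality together with the normalization $\|x^*\|=\|x\|$ built into the definition \eqref{Jx-set} of $J(x)$, one has $\Re\langle G_\lambda(x),x^*\rangle\le\|G_\lambda(x)\|\,\|x\|$; inserting the distortion bound \eqref{ineq-reso2} then gives $\Re\langle G_\lambda(x),x^*\rangle\le\left(\frac{2}{A+\sqrt B}\right)^{1/2}\|x\|^2$. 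Substituting this into the displayed identity and dividing by $\lambda$ yields precisely $\Re\langle f(G_\lambda(x)),x^*\rangle\ge a_\lambda\|x\|^2$, which is \eqref{a-condi} for $f\circ G_\lambda$.

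I do not expect a genuine obstacle here: the corollary is essentially a readout of Theorem~\ref{thm-reso-general-1}. The one point deserving attention is that the inequality must hold for \emph{every} support functional $x^*\in J(x)$, not merely for a single chosen one, since that is what Definition~\ref{def-dissip} demands. This causes no difficulty, because the only property of $x^*$ used in the Cauchy--Schwarz step is $\|x^*\|=\|x\|$, which is shared by all elements of $J(x)$. One may additionally observe, via Proposition~\ref{lemma2a} (applicable since $\widehat\N_a\subseteq\widehat\N_0$), that $G_\lambda(x)=g_\lambda(x)x$, so $f\circ G_\lambda$ is again of the form $p_\lambda(x)x$; this is not needed for the argument, but it clarifies why the estimate is in fact independent of the choice of $x^*$.
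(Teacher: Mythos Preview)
Your proposal is correct and follows essentially the same route as the paper: apply a support functional to the resolvent identity, bound $\Re\langle G_\lambda(x),x^*\rangle$ via $\|G_\lambda(x)\|\,\|x\|$ and invoke the distortion estimate~\eqref{ineq-reso2}. The only quibble is terminological: the bound $|\langle G_\lambda(x),x^*\rangle|\le\|G_\lambda(x)\|\,\|x^*\|$ is simply the definition of the dual norm rather than Cauchy--Schwarz, but the argument is unaffected.
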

 
 Thus $ f\circ G_\lambda $ is a holomorphically accretive mapping that generate the semigroup with squeezing ratio $a_\lambda$. 
 It turns out that resolvents themselves are holomorphically accretive. In the next theorem we prove this fact along estimation of squeezing ratio of generated semigroups. 

\begin{theorem}\label{thm-class}
  Let  $f\in\widehat{\N}_a,\ f(x)=p(x)x,$ $q=p(0)$ and $G=G_\lambda$ is the resolvent of $f$ corresponding to the parameter $\lambda>0$. Denote
 \begin{equation}\label{phi}
   \phi(t):=\frac{1+\lambda\Re q-t\left(1-\lambda(\Re q-2a)\right)} {|1+\lambda q|^2-t\left| 1-\lambda (q-2a)\right|^2}\,.
 \end{equation}
 Then $G\in\widehat{\N}_{d_\lambda}$,  $d_\lambda=\min\left\{\frac{1+\lambda\Re q} {|1+\lambda q|^2}, \phi\left(\frac2{A+\sqrt{B}} \right) \right\}$, where   $A=\left| 1 - \lambda q  \right|^2+4\lambda a+1$ and $B=\left(|1-\lambda q|^2-1\right)^2 +8\lambda^3a|q|^2$  as above. 
\end{theorem}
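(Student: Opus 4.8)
The plan is to reduce the statement to a scalar estimate and then extract a lower bound for $\Re g$, where $G_\lambda(x)=g(x)x$. First I would record that, by Proposition~\ref{lemma2a}, $G=G_\lambda\in\widehat{\Hol}(\B)$, so $G(x)=g(x)x$ for some $g\in\Hol(\B,\C)$, and that $G\in\widehat{\N}_{d_\lambda}$ is \emph{equivalent} to $\Re g(x)\ge d_\lambda$ on $\B$. Writing $P(w):=1+\lambda p(w)$ and $w:=G(x)$, the resolvent equation $(1+\lambda p(w))w=x$ gives $g(x)=1/P(w)$, hence $\Re g(x)=\Re\frac1{P(w)}=\frac{\Re P(w)}{|P(w)|^2}$. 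Since by Proposition~\ref{lemma2a}(b) the analysis restricts along each slice $\{zu:z\in\D\}$ to the one-dimensional resolvent of $\widetilde f_u$, it suffices to bound $\Re\frac1{P(w)}$ for scalar resolvent values $w$, which lets me work entirely inside $\D$.

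The core step is to control $\Re\frac1{P(w)}$ in terms of $\|w\|$. Here I would apply Proposition~\ref{prop-ineq} to $P$ (which satisfies $\Re P\ge 1+\lambda a$ and $P(0)=1+\lambda q$): for $\|w\|=r$ the value $P(w)$ lies in the explicit closed disk centered at $C(r)=\frac1{1-r^2}[(1-r^2)(1+2\lambda a-\lambda\overline q)+2\lambda(\Re q-a)]$ with radius $R(r)=\frac{2r\lambda(\Re q-a)}{1-r^2}$. Because $\zeta\mapsto\Re\frac1\zeta$ is harmonic away from the origin, its behaviour on this disk is governed by the image disk under inversion, and the extreme values of the real part simplify—using $m-n=2\lambda(\Re q-a)$, where $m=1+\lambda\Re q$ and $n=1-\lambda(\Re q-2a)$—to expressions matching the numerator and denominator of $\phi$. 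The decisive additional ingredient is the coupling $|P(w)|=\|x\|/\|w\|<1/r$ coming from the resolvent equation together with $\|x\|<1$, which constrains $P(w)$ to the part of the disk with $|P(w)|<1/r$; this is exactly what is needed to secure $\Re\frac1{P(w)}\ge\phi(\|w\|^2)$. I would then invoke the distortion bound $\|w\|^2=\|G_\lambda(x)\|^2\le\frac2{A+\sqrt B}$ from Theorem~\ref{thm-reso-general-1}.

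Finally I would note that $\phi$ is a linear-fractional (M\"obius) function of $t$, hence monotone on any interval avoiding its pole; after checking that $[0,\frac2{A+\sqrt B}]$ contains no pole, its minimum there is attained at an endpoint, giving $d_\lambda=\min\{\phi(0),\phi(\frac2{A+\sqrt B})\}$ with $\phi(0)=\frac{1+\lambda\Re q}{|1+\lambda q|^2}$. Combined with the core step this yields $\Re g(x)\ge\phi(\|G_\lambda(x)\|^2)\ge d_\lambda$. The main obstacle is precisely the pointwise estimate $\Re\frac1{P(w)}\ge\phi(\|w\|^2)$: the naive minimum of $\Re\frac1P$ over the \emph{whole} disk $D(C(r),R(r))$ falls strictly below $\phi(r^2)$ (it equals $\phi(r^2)$ minus a positive multiple of $r$), so the disk description alone is too weak. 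The delicate heart of the argument is therefore to use the resolvent coupling $r|P(w)|=\|x\|<1$ to discard the offending boundary arc of the disk, to recover $\phi$ exactly, and to confirm that this mechanism correctly produces the stated constant.
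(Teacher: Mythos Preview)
Your strategy is exactly the paper's: write $g(x)=1/P(w)$ with $P=1+\lambda p$ and $w=G(x)$, place $P(w)$ in the disk of Proposition~\ref{prop-ineq}(iii), invert to a disk for $g$, then minimize over $t=\|w\|^2\in\bigl[0,\tfrac{2}{A+\sqrt B}\bigr]$ using monotonicity of $\phi$. The paper does \emph{not} invoke the resolvent coupling $|P(w)|<1/\|w\|$; after obtaining $\Re g(x)\ge \Re c_1(w)-r_1(w)$ it simply asserts that substituting $c(w),r(w)$ produces $\phi(\|w\|^2)$ and continues.

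Your worry about that substitution is justified. With $s=\|w\|$, $\alpha=1+\lambda q$, $\beta=1-\lambda(q-2a)$, one checks $|1+\lambda c|^2-\lambda^2r^2=\dfrac{|\alpha|^2-s^2|\beta|^2}{1-s^2}$ and $1+\lambda(\Re c-r)=\dfrac{\Re\alpha+s\,\Re\beta}{1+s}$, hence
\[
\Re c_1-r_1=\frac{(1-s)(\Re\alpha+s\,\Re\beta)}{|\alpha|^2-s^2|\beta|^2}
=\phi(s^2)-\frac{2s\,\lambda(\Re q-a)}{|\alpha|^2-s^2|\beta|^2}<\phi(s^2)
\]
whenever $s>0$ and $\Re q>a$. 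So the paper's ``substitution'' step is an algebraic slip, and the obstacle you isolate is real. However, your proposed remedy---restricting to $\{|P|<1/\|w\|\}$---cannot recover the stated $\phi$, because the conclusion itself fails in that form: for $X=\C$, $a=0$, $\lambda=q=1$ and the extremal $p(z)=(1+z)/(1-z)$ one gets $G(x)=x/(2+x)$, $g(x)=1/(2+x)$, so $\inf_{\D}\Re g=\tfrac13$, while the formula gives $d_\lambda=\min\{\tfrac12,\tfrac12\}=\tfrac12$. What the disk argument actually yields is $G\in\widehat{\N}_{d'_\lambda}$ with $d'_\lambda=\min_{s\in[0,\rho]}\dfrac{(1-s)(\Re\alpha+s\,\Re\beta)}{|\alpha|^2-s^2|\beta|^2}$, $\rho=\sqrt{2/(A+\sqrt B)}$; this is the correct replacement for $\phi$, and your extra coupling is not needed for it.
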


\begin{proof} 
It follows from Proposition~\ref{prop-ineq} that for every $w\in\B$ the value $1+\lambda p(w)$  lies in the closed disk centered at $1+\lambda c(w)$ and of radius $\lambda r(w)$ with $c(w)= \frac{ q  + \overline{q}\|w\|^2 - 2a\|w\|^2}{1-\|w\|^2}$ and $ r(w)=\frac{2\|w\|\left( \Re q  -a \right)}{1-\|w\|^2}$.
 
 Proposition~\ref{lemma2a} states that for each $\lambda>0$ the mapping $G$ is o.d.t., so that $G(x)=g(x)x$. Moreover, $g(x)=\frac1{1 +\lambda p(w)}$, where $w=G(x)$. Thus, the value $g(x)$ lies in the disk centered at the point $
 c_1(w)=\frac{1+\lambda\overline{c(w)}}{|1+\lambda c(w)|^2-\lambda^2r(w)^2}$ of radius $r_1(w)= 
 \frac{\lambda r(w)}{|1+\lambda c(w)|^2-\lambda^2r(w)^2}$.  Summarizing these facts, one concludes that
 \[
 \Re\frac{\left\langle G(x),x^*\right\rangle} {\|x\|^2} \ge \Re c_1(w)-r_1(w)= \frac{1+\lambda(\Re c(w)-r(w))}{|1+\lambda c(w)|^2-\lambda^2r(w)^2}\,, 
 \]
 where $ w=G(x).$
 
Substituting in the last expression $c_1(w)$ and $r_1(w)$, we see that
\[
\Re\frac{\left\langle G(x),x^*\right\rangle} {\|x\|^2} \ge \frac{1+\lambda\Re q-t\left(1-\lambda(\Re q-2a)\right)} {|1+\lambda q|^2-t\left| 1-\lambda (q-2a)\right|^2}\,, \ t=\|w\|^2.
\]
It follows from Theorem~\ref{thm-reso-general-1} that $t<\frac{2}{A+\sqrt{B}}$.
Thus,  $G\in\widehat{\N}_{d_\lambda}$ with $d_\lambda= \min_{t\in \left[0, \frac{2}{A+\sqrt{B}}\right] } \phi(t),$ where function $\phi$ is defined by \eqref{phi}. Since this function is monotone, its minimum is attained at the ends of the segment. So, the statement follows. 
\end{proof}

\medskip

\section{Order of starlikeness of resolvents}\label{sect-starlike}

\setcounter{equation}{0}

We  already mentioned that in the one-dimensional case nonlinear resolvents are starlike functions of order at least $\frac12$, see Subsection~\ref{ssec-what} and Definition~\ref{def-star}.    

Another method to estimate order of starlikeness of nonlinear resolvents as a function of the resolvent parameter was used in \cite{E-J-resolv} for the one-dimensional case and in \cite{E-24a} for the infinite-dimensional case. The purpose of this section is to improve and modify the method used earlier in order to find sharper order of starlikeness for nonlinear resolvents of an arbitrary mapping $f\in\widehat{\N}_a$. 

Recall that according to Definition~\ref{def-dissip}, $f(x)=p(x)x$, where $p\in\Hol(\B,\C)$ satisfies $\Re p(x)\ge a$ in $\B$. As before, let $q=p(0)$.

We need the following notation. Let $\alpha>0,\ \beta\ge0$. Denote 
  \begin{equation}\label{T}
    T(r)= \frac{2 \alpha r} {(1+\beta)(1-r)^2 +\alpha(1-r^2)}.
  \end{equation}
Obviously, $T\left(= T_{\alpha,\beta}\right)$ is an increasing function in $r\in[0,1)$. 

\begin{theorem}\label{thm-1dim-a}
  Let $f\in\widehat{\N}_a,\ a\ge0,$  and $G(=G_\lambda)$ be the resolvent of $f$ corresponding to the parameter $\lambda>0$. If $ \|G(x)\| \le \rho$ on $\B$, then 
  \[
  \left| \frac{1}{\|x\|^2} \left\langle \left(G'(x) \right)^{-1} G(x), x^* \right\rangle -1\right| \le  T(\rho) \quad\mbox {for all}\quad  x\in\B,
 \]
where function $T$ is be defined by \eqref{T} with $\alpha= \lambda (\Re q-a)$ and $\beta=\lambda a$.     
\end{theorem}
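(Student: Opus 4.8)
The plan is to collapse the $X$-valued problem to a one-dimensional one along each direction $u\in\partial\B$, and then to bound a single scalar logarithmic-derivative expression using the Riesz--Herglotz formula of Proposition~\ref{prop-ineq}. First I would record, via Proposition~\ref{lemma2a}(a), that $G(x)=g(x)x$ for some $g\in\Hol(\B,\C)$, and apply the computation carried out in the proof of Proposition~\ref{prop-star-1d} verbatim with $(G,g)$ in the role of $(f,p)$. Fixing $u\in\partial\B$ and setting $H(z):=g(zu)z$, this gives
\[
\left.\frac{\left\langle (G'(x))^{-1}G(x),x^*\right\rangle}{\|x\|^2}\right|_{x=zu}=\frac{H(z)}{zH'(z)},\qquad z\in\D\setminus\{0\}.
\]
Since every nonzero $x\in\B$ equals $zu$ with $z=\|x\|$ real and $u=x/\|x\|$ (and $x=0$ is handled by continuity, the ratio tending to $1$), it suffices to bound $\bigl|\tfrac{H(z)}{zH'(z)}-1\bigr|$. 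By Proposition~\ref{lemma2a}(b), $H$ is the one-dimensional resolvent of $\widetilde f_u(z)=\widetilde p(z)z$, where $\widetilde p(z):=p(zu)$, and $|H(z)|=\|G(zu)\|\le\rho$.

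Next I would extract the quantity to estimate from the resolvent equation itself. Writing $w=H(z)$, differentiation of $w\,(1+\lambda\widetilde p(w))=z$ gives $H'(z)=\bigl(1+\lambda\widetilde p(w)+\lambda w\,\widetilde p'(w)\bigr)^{-1}$, and dividing by $z=w(1+\lambda\widetilde p(w))$ yields
\[
\frac{H(z)}{zH'(z)}-1=\frac{\lambda w\,\widetilde p'(w)}{1+\lambda\widetilde p(w)},\qquad w=H(z),\ \ |w|\le\rho.
\]
Thus the theorem reduces to the scalar inequality $\bigl|\lambda w\,\widetilde p'(w)\bigr|\big/\bigl|1+\lambda\widetilde p(w)\bigr|\le T(\rho)$ for $|w|\le\rho$. (If $\Re q=a$ then $\alpha=0$, $\widetilde p$ is constant, and both sides vanish; so assume $\Re q>a$.)

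To prove this scalar bound I would invoke Proposition~\ref{prop-ineq}(ii): there is a probability measure $\mu$ with $\widetilde p(\xi)=(\Re q-a)\int\frac{1+\xi\bar\zeta}{1-\xi\bar\zeta}\,d\mu(\zeta)+a+i\Im q$. Differentiating under the integral and substituting $\alpha=\lambda(\Re q-a)$, $\beta=\lambda a$, the numerator equals $\alpha w\int\frac{2\bar\zeta}{(1-w\bar\zeta)^2}\,d\mu$ and the denominator equals $(1+\beta)+i\lambda\Im q+\alpha\int\frac{1+w\bar\zeta}{1-w\bar\zeta}\,d\mu$. Setting $I:=\int|1-w\bar\zeta|^{-2}\,d\mu$, I would bound the numerator by $2\alpha|w|\,I$ and the denominator from below by its real part $(1+\beta)+\alpha(1-|w|^2)I$, using $\Re\frac{1+w\bar\zeta}{1-w\bar\zeta}=\frac{1-|w|^2}{|1-w\bar\zeta|^2}$. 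This produces the bound $\frac{2\alpha|w|\,I}{(1+\beta)+\alpha(1-|w|^2)I}$, which is increasing in $I$; since $\mu$ is a probability measure and $|1-w\bar\zeta|\ge1-|w|$ we have $I\le(1-|w|)^{-2}$, and inserting this value collapses the bound to exactly $T(|w|)\le T(\rho)$, because $T$ is increasing.

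The main obstacle, and the only nonroutine point, is the last step: the numerator and denominator estimates must be coupled through the \emph{same} integral $I$. Estimating them independently---e.g.\ using the crude $|1+\lambda\widetilde p(w)|\ge 1+\beta$---would discard the $\alpha(1-\rho^2)$ contribution in the denominator of $T$ and yield a strictly weaker constant. Recognizing that both bounds are controlled by the one parameter $I$, that the resulting one-variable function is monotone, and that $I$ is capped by the Harnack-type value $(1-|w|)^{-2}$, is exactly what delivers the sharp $T(\rho)$; everything else is the bookkeeping of the reduction to a fixed direction and elementary differentiation.
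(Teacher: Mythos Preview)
Your proof is correct and follows the paper's approach: express $\tfrac{1}{\|x\|^2}\langle(G'(x))^{-1}G(x),x^*\rangle-1$ as $\dfrac{\lambda w\,\widetilde p'(w)}{1+\lambda\widetilde p(w)}$ via the resolvent equation (the paper derives this directly in $\B$ rather than first invoking Proposition~\ref{prop-star-1d}, but the computation is identical), then bound this scalar quantity using the Riesz--Herglotz representation of Proposition~\ref{prop-ineq}. The only difference is cosmetic, in the last step: the paper factors the numerator integral against the kernel of $B_\lambda:=\Re(1+\lambda\widetilde p)$ and bounds the pointwise ratio $A_\lambda(z,\zeta)\le T(|z|)$ for each $\zeta$, whereas you isolate the single parameter $I=\int|1-w\bar\zeta|^{-2}\,d\mu$, observe that the bound is increasing in $I$, and cap $I\le(1-|w|)^{-2}$---two phrasings of the same inequality.
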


\begin{proof}
By Proposition~\ref{lemma2a}, $G\in\widehat{\Hol}(\B)$, so that $G(x)=g(x)x$ with $g\in\Hol(\B,\D)$. Therefore 
$G'(x)x=\left(g'(x)x+g(x)\right)x$, so that $$\left(G'(x)\right)^{-1}G(x)=\frac{g(x)}{g'(x)x+g(x)}\,x.$$

Let $f(x)= p(x)x$. A straightforward differentiation of the resolvent equation $G(x)+\lambda p(G(x))G(x)=x $ yields
\[
\frac{1}{g'(x)x+g(x)} = 1+\lambda p(G(x)) + \lambda\left( p'(G(x))x \right) g(x).
\]
Therefore,
\begin{eqnarray*}
 && \frac{1}{\|x\|^2} \left\langle \left(G'(x) \right)^{-1} G(x), x^* \right\rangle \\ 
 &=& \frac{1}{\|x\|^2} \left\langle \bigl(1 +\lambda p(G(x)) +\lambda p'(G(x))G(x) \bigr) G(x) , x^* \right\rangle \\
   &=& \left( 1 +\lambda p'(w)w   +p(w) \right) g(x),
\end{eqnarray*}
where we denoted $w=G(x)$. 
In addition, the resolvent equation implies $g(x)=\frac{1}{1+\lambda p(w)}$. Combining these relations we conclude that 
\begin{equation}\label{starGr}
 \frac{1}{\|x\|^2} \left\langle \left(G'(x) \right)^{-1} G(x), x^* \right\rangle = 1+ \frac{\lambda p' (w)w}{1+\lambda p(w)}\,,\quad w=G(x).
\end{equation}
Therefore we need to estimate the range of  $\displaystyle \frac{\lambda w p' (w)}{1+\lambda p(w)}$ whenever ${w\in G(\B)}$. To do this, we fix an arbitrary $u\in\partial\B$ and write the Riesz--Herglotz formula for the function $p$ in the form given in assertion (ii) of Proposition~\ref{prop-ineq}:   
       \begin{equation*}
p(zu)=\int_{|\zeta|=1} \frac{(1+z\overline{\zeta})(\Re q-a)}{1-z\overline{\zeta}}\,d\mu(\zeta)+a+i\Im q. 
      \end{equation*}

Hence the function $B_\lambda$ defined by $B_\lambda(z):=\Re (1+\lambda p(zu))$ can be represented by the integral
\begin{equation*}
B_\lambda(z)=\int_{|\zeta|=1} \frac{(1+\lambda a)|1-z\overline{\zeta}|^2 + \lambda (\Re q-a)(1-|z|^2)} {|1- z\overline{\zeta}|^2}d\mu(\zeta).
\end{equation*}

In addition, differentiating $p(zu)$ with respect to $z$ we get that the function $C_\lambda(z):=\lambda z p'(zu)u $ satisfies  
\begin{eqnarray*}
\left|C_\lambda(z) \right| &\leq & \int_{|\zeta|=1} \frac{2 \lambda (\Re q-a) |z\overline{\zeta}|}{|1-z\overline{\zeta}|^2}d\mu(\zeta) \\
  &=& \int_{|\zeta|=1}  \frac{(1+\lambda a)|1-z\overline{\zeta}|^2 + \lambda (\Re q-a)(1-|z|^2)} {|1- z\overline{\zeta}|^2} \cdot   A_\lambda(z,\zeta) d\mu(\zeta),
\end{eqnarray*}
where
\begin{eqnarray*} 
  A_\lambda(z,\zeta) &:=&  \frac{2 \lambda (\Re q-a) |z\overline{\zeta}|} {(1+\lambda a)|1-z\overline{\zeta}|^2 + \lambda (\Re q-a)(1-|z|^2)} \\
   &\le& \frac{2 \lambda  (\Re q-a) |z|} {(1+\lambda a)(1-|z| )^2 + \lambda (\Re q- a)(1-|z|^2)} =T_{\alpha,\beta}(|z|),
\end{eqnarray*}
where function $T$ is defined by \eqref{T} with $\alpha= \lambda (\Re q-a)$ and $\beta=\lambda a$.

Therefore,
\begin{equation*}
 \left|  \frac{\lambda zp'(zu)u} {1+ \lambda p(zu)}\right| \leq \frac{\left|C_\lambda (z)\right|} {B_\lambda(z)}\leq T_{\alpha,\beta}(|z|).
\end{equation*}
As we noticed, $T\left(= T_{\alpha,\beta}\right)$ is an increasing function, hence  $\left|  \frac{\lambda zp'(zu)u} {1+ \lambda p(zu)}\right|\le T(\rho)$ as $|z|\le\rho$. Since $u\in\partial\B$ is arbitrary, we conclude by our assumptions that $ \left|  \frac{\lambda p'(w)w} {1+\lambda p(w)}\right| \leq T(\rho)$ for all $w\in G(\B)$. 

Comparing this result with equation \eqref{starGr}, we see that all values of $\frac{1}{\|x\|^2} \left\langle \left(G'(x) \right)^{-1} G(x), x^* \right\rangle$ lies in the disk of radius $T(\rho)$ and centered at~$1$. This completes the proof. 
\end{proof}
 
As a consequence of Theorem~\ref{thm-1dim-a}, one can get order of starlikeness and of strong starlikeness for resolvents, see Definition~\ref{def-star}. To this end denote 
  \[
       \rho^*:= \frac{\sqrt{1+\lambda\Re q}}{\sqrt{2\lambda(\Re q-a)}+\sqrt{1+\lambda\Re q}}\,,
  \]  
which is the smallest positive root of the equation $T(r)=1$. 

\begin{corollary}\label{cor-star}
Let  $G$ be the resolvent of $f\in\widehat{\N}_a,\ a\ge0,$ corresponding to the parameter $\lambda>0$. Then $G$ is a starlike mapping of order $\frac12.$ Moreover, if $ \|G(x)\| \le \rho\le\rho^*$ on $\B$,   then mapping $G$ is starlike of order $\frac{1}{1+T(\rho)}$ and strongly starlike of order $\frac{2\arcsin T(\rho)}\pi$.
\end{corollary}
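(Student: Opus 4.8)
The plan is to derive Corollary~\ref{cor-star} as a direct consequence of Theorem~\ref{thm-1dim-a} together with the algebraic definitions of the various orders of starlikeness in Definition~\ref{def-star}. The key output of Theorem~\ref{thm-1dim-a} is that the quantity $\displaystyle w(x):=\frac{1}{\|x\|^2}\left\langle \left(G'(x)\right)^{-1}G(x),x^*\right\rangle$ lies in the closed disk centered at $1$ of radius $T(\rho)$. So everything reduces to translating ``$|w-1|\le T(\rho)$'' into each of the three geometric conclusions.

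First I would establish the unconditional claim that $G$ is starlike of order $\tfrac12$. For this I would invoke the bound $\|G(x)\|\le\rho$ with the trivial choice $\rho=1$ (valid since $G\in\widehat{\Hol}(\B)$ with $g\in\Hol(\B,\D)$, as noted in the proof of Theorem~\ref{thm-1dim-a}), but more cleanly I would note that $T(r)\to 1$ as $r\to 1^-$, and that the disk $|w-1|\le 1$ is exactly the disk $\left|w-\tfrac{1}{2\gamma}\right|\le\tfrac{1}{2\gamma}$ appearing in \eqref{star-order} with $\gamma=\tfrac12$. Hence $w(x)$ always lies in the half-plane $\Re w\ge 0$ shifted appropriately, giving order $\tfrac12$ for every $\lambda>0$; this recovers the known one-dimensional fact via Proposition~\ref{prop-star-1d}.

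Next I would handle the refined orders under the hypothesis $\rho\le\rho^*$. The role of $\rho^*$ is that it is the smallest positive root of $T(r)=1$, so since $T$ is increasing, $\rho\le\rho^*$ guarantees $T(\rho)\le 1$. For the order of starlikeness, I would compare the disk $|w-1|\le T(\rho)$ with the defining disk $\left|w-\tfrac{1}{2\gamma}\right|\le\tfrac{1}{2\gamma}$: the largest disk centered at $1$ that fits inside the standard starlikeness disk of order $\gamma$ has radius $1-\tfrac{1}{2\gamma}\cdot(\text{something})$; solving so that the disk $|w-1|\le T(\rho)$ is contained in $\left|w-\tfrac{1}{2\gamma}\right|\le\tfrac{1}{2\gamma}$ forces the tangency condition $\tfrac{1}{2\gamma}=\tfrac{1+T(\rho)}{2}$, i.e. $\gamma=\tfrac{1}{1+T(\rho)}$. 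For strong starlikeness, I would observe that a disk centered at $1$ of radius $T(\rho)\le 1$ lies in the sector $|\arg w|\le\arcsin T(\rho)$, since the tangent lines from the origin to that circle make angle $\arcsin T(\rho)$ with the positive real axis; comparing with \eqref{str-star-order} gives $\tfrac{\pi\beta}{2}=\arcsin T(\rho)$, that is, order $\tfrac{2\arcsin T(\rho)}{\pi}$.

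The main obstacle is not any single computation but ensuring the geometry of disk-in-disk and disk-in-sector containments is stated sharply and that the hypothesis $\rho\le\rho^*$ (equivalently $T(\rho)\le 1$) is genuinely what is needed: if $T(\rho)>1$ the disk $|w-1|\le T(\rho)$ contains $0$, so neither the order-$\gamma$ disk condition (which would force $\gamma<0$) nor the sector condition (the sector would have to be the whole plane) makes sense, which is precisely why the refined conclusions require $\rho\le\rho^*$. I would verify that $\rho^*$ as defined is indeed the root of $T(r)=1$ by substituting into \eqref{T} and solving the resulting quadratic, keeping the smaller positive root. With $T(\rho)\le 1$ secured, both containments are elementary, and the corollary follows immediately from Theorem~\ref{thm-1dim-a}.
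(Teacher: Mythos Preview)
Your handling of the second assertion (the refined orders under $\rho\le\rho^*$) is correct and matches the paper's proof: the paper likewise observes that $T(\rho)\le 1$, that the disk $|w-1|\le T(\rho)$ sits inside the disk $\left|w-\tfrac{1}{2\gamma}\right|\le\tfrac{1}{2\gamma}$ with $\gamma=\tfrac{1}{1+T(\rho)}$, and inside the sector $|\arg w|\le\arcsin T(\rho)$, and then invokes Theorem~\ref{thm-1dim-a} and Definition~\ref{def-star}.

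However, your proof of the unconditional order-$\tfrac12$ claim has a genuine gap. The assertion that $T(r)\to 1$ as $r\to 1^-$ is false: from \eqref{T},
\[
T(r)=\frac{2\alpha r}{(1-r)\bigl[(1+\beta)(1-r)+\alpha(1+r)\bigr]}\longrightarrow +\infty
\]
as $r\to 1^-$, since the numerator tends to $2\alpha>0$ while the denominator tends to $0$. (Indeed, the very definition of $\rho^*<1$ as the root of $T(r)=1$ already signals that $T$ exceeds $1$ beyond $\rho^*$.) Consequently, Theorem~\ref{thm-1dim-a} with $\rho$ near $1$ does not force $|w(x)-1|\le 1$, and your argument for order $\tfrac12$ collapses. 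The paper does \emph{not} derive the order-$\tfrac12$ statement from Theorem~\ref{thm-1dim-a}; instead it uses Proposition~\ref{lemma2a} and Proposition~\ref{prop-star-1d} to reduce to the one-dimensional setting, where the function $z\mapsto g_\lambda(zu)z$ is a resolvent of the accretive function $z\mapsto p(zu)z$ on $\D$, and then cites \cite[Corollary~2.6]{E-S-S}, which establishes order $\tfrac12$ for one-dimensional resolvents by a separate argument. That external input is essential here and cannot be replaced by the disk estimate of Theorem~\ref{thm-1dim-a} alone.
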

\begin{proof}
 Applying Proposition~\ref{lemma2a} (i), one concludes that $G\in\widehat{\Hol}(\B)$, that is, $G(x)=g(x)x,\ g\in\Hol(\B,\D)$. In order to prove that $G$ is starlike of some order, we have to show according to Proposition~\ref{prop-star-1d} that for every $u\in\partial\B$ the function $z\mapsto g(zu)z$ is starlike of the same order. It follows from Proposition~\ref{lemma2a} (ii) that the last function is a non-linear resolvent of the function $p(zu) z$, which is holomorphically accretive in the open unit disk $\D$. Therefore the first assertion of the theorem follows from \cite[Corollary 2.6]{E-S-S}

As for the second assertion, we note that $T(\rho)\le1$, and the disk $|\zeta-1|\le T(\rho)$ is a subset of the disk $\left|\zeta-\frac1{2\gamma}\right|\le \frac1{2\gamma}$ with $\gamma=\frac{1}{1+T(\rho)}$ and of the wedge $\left| \arg\zeta\right|\le\arcsin T(\rho)$. Thus the conclusion follows from Theorem~\ref{thm-1dim-a} and Definition~\ref{def-star}.
\end{proof}

It is natural to apply Theorem~\ref{thm-1dim-a} and Corollary~\ref{cor-star} together with finding concrete $\rho\le \rho^*$ for which $ \|G(x)\| \le \rho$, $x\in\B$. To do this, we will use the results of the previous section. 

By Theorem~\ref{thm-reso-general-1}, all values of the function $G_\lambda$ lies~in the disk of 
radius $\sqrt{\frac{2}{A+\sqrt{B}}}$, where
 \begin{equation}\label{AB}
\left\{ \begin{array}{l}
   A=\left| 1 - \lambda q  \right|^2+4\lambda a+1, \\
   B=\left(|1-\lambda q|^2-1\right)^2 +8\lambda^3a|q|^2. 
 \end{array}\right.
  \end{equation}

Thus, we need conditions that ensure the inequality ${\sqrt{\frac{2}{A+\sqrt{B}}}\le \rho^*}$, where $\rho^*$ is defined before Corollary~\ref{cor-star}. This relation is equivalent to 
\begin{equation}\label{ineq-main}
2\left(\sqrt{\frac{2\lambda(\Re q-a)}{1+\lambda\Re q}} +1 \right)^2 \le A+\sqrt{B}.
\end{equation}
(So, in this case $G$ is starlike of order $\frac{1}{1+T(\rho)}$, $\rho=\sqrt{\frac{2}{A+\sqrt{B}}}$ by  Corollary~\ref{cor-star}.)

Not knowing how to handle this inequality, we will look at stronger ones. Specifically, we analyze the two cases. 

{\bf In the first one,} $|1-\lambda q|^2\ge1$,  or equivalently, 
$\lambda|q|^2\ge2\Re q$.  
In this case we decrease $A$ and $B$ replacing $|q|$ in \eqref{AB} by $\Re q$. In particular, 
$B\ge \left((1-\lambda\Re q)^2-1\right)^2 +8\lambda^3a \Re^2q.$  Introduce the parameters $x=2\lambda(\Re q-a)$
and $s=\lambda\Re q$ and consider the inequality 
\[
2\left(\sqrt{\frac{x}{1+s}} +1 \right)^2 <s^2+2s+2-2x+s\sqrt{(s+2)^2 - 4x},
\]
which is stronger than \eqref{ineq-main}. Solving this inequality we get $x<\frac{4s^2(1+s)}{(2+s)^2}. $ This means that 
\[
\Re q -a<\frac{2\lambda\Re^2 q(1+\lambda\Re q)}{(2+\lambda\Re q)^2}\,.
\]
The solution to the last inequality with respect to $\lambda$ is $\lambda>M_1(q,a),$ where
\begin{equation}\label{M1}
M_1(q,a):=\frac{\sqrt{5\Re^2 q-4a\Re q} +\Re q-2a} {(\Re q+a)\Re q}\,.
\end{equation}

\vspace{2mm}
 
 {\bf In the opposite case} $\lambda |q|^2<2\Re q$. First, assume that $q=p(0)$ is real. Then  $B=\lambda^2 q^2 \left(2- \lambda q\right)^2 +8\lambda^3a q^2$. In this situation  inequality~\eqref{ineq-main} can be solved relative to $a$:
 \[
 a>\frac{4+2\lambda q -(\lambda q)^2}{(2+\lambda q)^2}.
 \] 

In the general situation (that is, when $p(0)$ is not necessarily real), we cannot omit $\Im q$. Therefore let estimate $B$ differently: 
$\sqrt{B}\ge 1-|1-\lambda q|^2$. Then $A+\sqrt{B}\ge 4\lambda a+2$. Thus, we replace inequality~\eqref{ineq-main} with more stringent:
\[
\left(\sqrt{\frac{2\lambda(\Re q-a)}{1+\lambda\Re q}} +1 \right)^2 < 2\lambda a+1.
\]
This inequality holds if and only if $a>M_2(q,\lambda)$, where
\begin{equation}\label{M3}
M_2(q,\lambda):=\frac{(s+1)\sqrt{2s^2 + 4s +1}+s^2 +s-1} {\lambda(2+s)^2}\,,\quad s=\lambda\Re q.
\end{equation}

We summarize these facts in the following theorem.

\begin{theorem}\label{thm-calc}
 Let $f\in\widehat{\N}_a,\ a\ge0,\ f(x)=p(x)x,\ q=p(0),$  and $G$ be the resolvent of $f$ corresponding to the parameter 
 $\lambda>0$. Let $A,\ B,\ M_1(q,a)$ and $M_2(q,\lambda)$ be defined by \eqref{AB}, \eqref{M1} and \eqref{M3}. 
 Assume that one of the following conditions holds:
 \begin{itemize}
   \item [(i)]  $\lambda|q|^2\ge2\Re q$ and $\lambda>M_1(q,a);$
   \item [(ii)]  $\lambda|q|^2<2\Re q$ and $a>M_2(q,\lambda)$.
 \end{itemize}
 Then $G$ is a starlike mapping of order $\frac{1}{1+T\left(\sqrt{\frac{2}{A+\sqrt{B}}}\right)}.$
\end{theorem}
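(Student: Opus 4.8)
The plan is to assemble results already in hand rather than to start afresh. By Theorem~\ref{thm-reso-general-1} the resolvent obeys the distortion bound $\|G(x)\|\le\rho$ on $\B$ with $\rho:=\sqrt{2/(A+\sqrt B)}$, and by Corollary~\ref{cor-star} the estimate $\rho\le\rho^*$ is exactly what is needed to upgrade starlikeness of order $\tfrac12$ to starlikeness of order $\frac1{1+T(\rho)}$. Hence the entire proof collapses to verifying the single scalar inequality $\sqrt{2/(A+\sqrt B)}\le\rho^*$. Inserting the explicit value of $\rho^*$ and clearing the square roots, this is equivalent to \eqref{ineq-main}, namely $2\bigl(\sqrt{2\lambda(\Re q-a)/(1+\lambda\Re q)}+1\bigr)^2\le A+\sqrt B$. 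So the first step I would record is this reduction; everything that remains is to show that each of the hypotheses (i) and (ii) forces \eqref{ineq-main}.

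Because \eqref{ineq-main} couples $A$, $\sqrt B$ and the radical term in an intractable way, I would not attack it head-on but would replace $A+\sqrt B$ in each regime by a cruder, explicitly solvable lower bound. In Case~(i), where $\lambda|q|^2\ge2\Re q$ (equivalently $|1-\lambda q|^2\ge1$), the idea is to discard the imaginary part of $q$: replacing $|q|$ by $\Re q$ throughout \eqref{AB} diminishes both $A$ and $B$, so the resulting inequality is stronger than \eqref{ineq-main} and any $\lambda$ satisfying it satisfies \eqref{ineq-main} as well. After the substitution $s=\lambda\Re q$, $x=2\lambda(\Re q-a)$ the stronger inequality becomes the polynomial/radical relation $2(\sqrt{x/(1+s)}+1)^2<s^2+2s+2-2x+s\sqrt{(s+2)^2-4x}$, which can be solved explicitly for $x$ to give $x<4s^2(1+s)/(2+s)^2$; unwinding the substitution and solving for $\lambda$ produces precisely the threshold $\lambda>M_1(q,a)$ of \eqref{M1}.

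In Case~(ii), where $\lambda|q|^2<2\Re q$ (so $|1-\lambda q|^2<1$), a different relaxation is natural: since $B\ge(|1-\lambda q|^2-1)^2$ we have $\sqrt B\ge 1-|1-\lambda q|^2$, whence $A+\sqrt B\ge4\lambda a+2$. It is then enough to secure $\bigl(\sqrt{2\lambda(\Re q-a)/(1+\lambda\Re q)}+1\bigr)^2<2\lambda a+1$, now a one-variable inequality that I would solve for $a$, yielding the bound $a>M_2(q,\lambda)$ of \eqref{M3}. (For real $q$ one can retain more of $B$ and obtain a sharper condition, but the uniform statement uses the cruder estimate.) In either case $\rho\le\rho^*$ holds, so both hypotheses of Corollary~\ref{cor-star} are met and it delivers the claimed order $\frac1{1+T(\rho)}$.

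The genuine difficulty is not in any single computation but in the choice of relaxations: inequality \eqref{ineq-main} is not solvable in closed form, so one must weaken $A+\sqrt B$ just enough to become explicitly solvable yet not so much that the resulting sufficient conditions turn vacuous. The one point I would check with care is the legitimacy of the Case~(i) relaxation---that replacing $|q|$ by $\Re q$ really does decrease $A+\sqrt B$ under the standing assumption $|1-\lambda q|^2\ge1$ (the effect on the term $(|1-\lambda q|^2-1)^2$ in $B$ is the only one that is not immediate)---and, symmetrically, that the bound $\sqrt B\ge1-|1-\lambda q|^2$ used in Case~(ii) does not throw away so much as to make the hypothesis $a>M_2(q,\lambda)$ impossible to satisfy. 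Once these relaxations are justified, extracting $M_1$ and $M_2$ is routine algebra.
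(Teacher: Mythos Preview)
Your proposal is correct and follows essentially the same route as the paper: the paper too reduces to inequality~\eqref{ineq-main} via Theorem~\ref{thm-reso-general-1} and Corollary~\ref{cor-star}, then in Case~(i) replaces $|q|$ by $\Re q$ and passes to the variables $s=\lambda\Re q$, $x=2\lambda(\Re q-a)$ to obtain $M_1$, and in Case~(ii) uses $\sqrt B\ge 1-|1-\lambda q|^2$ to get $A+\sqrt B\ge 4\lambda a+2$ and hence $M_2$. Your flagged concern about the monotonicity of $(|1-\lambda q|^2-1)^2$ under the substitution $|q|\mapsto\Re q$ is well taken and is indeed the only step that is not immediate; the paper itself asserts rather than verifies it.
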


\begin{figure}[t]
  \includegraphics[width=60mm, height=4.8cm]{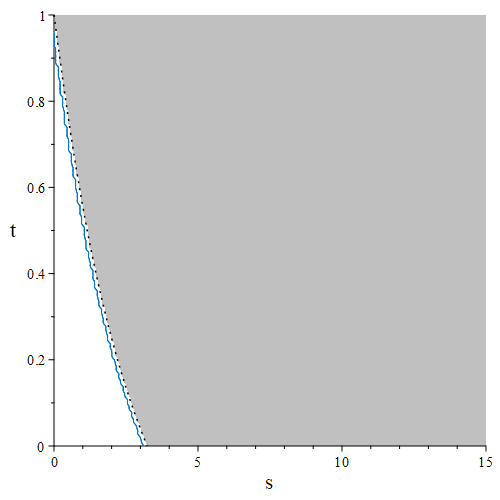}
  \caption{Range of the parameters $s$ and $t$}
  \label{fig:orders2}
  \end{figure}
  
\begin{corollary}
  If $q$ is real, $a\in\left[0,q\right)$, then $G_\lambda$ is starlike of order $\frac{1}{1+T\left(\sqrt{\frac{2}{A+\sqrt{B}}}\right)}$ 
  whenever $\lambda>\frac{\sqrt{5 p^2(0)-4a q} + q-2a}{( q+a) q}$ (cf. \eqref{M1}).
\end{corollary}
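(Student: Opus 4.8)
The plan is to show that, for real $q$, the two-case threshold of Theorem~\ref{thm-calc} collapses to the single condition $\lambda>M_1(q,a)$, so that the corollary follows directly from Corollary~\ref{cor-star}. First I would note that $a\ge0$ and $a<q$ force $q>0$, whence $\Re q=q$ and the dichotomy $\lambda|q|^2\gtrless2\Re q$ that separates cases (i) and (ii) becomes simply $\lambda\gtrless2/q$.

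The key point is that for real $q$ the inequality \eqref{ineq-main} coincides \emph{verbatim} with the auxiliary inequality
\[
2\bigl(\sqrt{x/(1+s)}+1\bigr)^2<s^2+2s+2-2x+s\sqrt{(s+2)^2-4x}
\]
that was introduced to define $M_1$, where $s=\lambda q$ and $x=2\lambda(\Re q-a)$. Indeed, substituting the real-$q$ forms $A=(1-s)^2+1+4\lambda a$ and $\sqrt B=s\sqrt{(2-s)^2+8\lambda a}$ and using the identities $4\lambda a=4s-2x$ and $8\lambda a=8s-4x$ transforms the right-hand side $A+\sqrt B$ of \eqref{ineq-main} into $s^2+2s+2-2x+s\sqrt{(s+2)^2-4x}$, while the left-hand side is already $2(\sqrt{x/(1+s)}+1)^2$. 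Thus, when $q$ is real, the replacement $|q|\mapsto\Re q$ used in the first-case analysis is an equality and involves no weakening; the restriction $\lambda\ge2/q$ under which $M_1$ was originally derived only dictated which weakening to apply in the complex case and plays no role in the algebra.

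I would then solve this inequality for $x$ exactly as before, obtaining $x<\frac{4s^2(1+s)}{(2+s)^2}$, i.e. $\Re q-a<\frac{2\lambda\Re^2q(1+\lambda\Re q)}{(2+\lambda\Re q)^2}$, i.e. $\lambda>M_1(q,a)$. Because $s\mapsto\frac{2s(1+s)}{(2+s)^2}$ increases strictly from $0$ to $2$, this chain of equivalences is a genuine statement valid for every $\lambda>0$. Hence $\lambda>M_1(q,a)$ yields $\sqrt{2/(A+\sqrt B)}\le\rho^*$; combined with the distortion bound $\|G_\lambda(x)\|\le\sqrt{2/(A+\sqrt B)}\|x\|$ of Theorem~\ref{thm-reso-general-1}, Corollary~\ref{cor-star} then gives that $G_\lambda$ is starlike of order $\frac1{1+T(\sqrt{2/(A+\sqrt B)})}$.

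The main obstacle is to make the solution of the irrational inequality airtight over the whole range $\lambda>0$: isolating $s\sqrt{(s+2)^2-4x}$ and squaring produces $x=\frac{4s^2(1+s)}{(2+s)^2}$ as a root, but this is a genuine (non-spurious) boundary only while $4+4s-s^2\ge0$, that is $s\le2+2\sqrt2$. I would dispose of the complementary regime by observing that for $s\ge1+\sqrt5$ one already has $\frac{4s^2(1+s)}{(2+s)^2}\ge2s\ge x$ for every admissible $a\in[0,q)$, so \eqref{ineq-main} then holds automatically; since $M_1(q,a)$ is decreasing in $a$ with $M_1(q,0)=\frac{1+\sqrt5}{q}$, whenever the squared root is spurious (large $s$) we are already in this automatic regime, and whenever it is genuine the displayed solution is exact. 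This sign-and-monotonicity bookkeeping is precisely what licenses replacing the two separate thresholds of Theorem~\ref{thm-calc} by the single $M_1(q,a)$.
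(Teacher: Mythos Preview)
Your argument is correct and is essentially the paper's own (implicit) reasoning. The paper does not give a separate proof of this corollary: it relies on the observation, recorded in the ``second case'' discussion just above Theorem~\ref{thm-calc}, that for real $q$ inequality~\eqref{ineq-main} can be solved \emph{exactly} and is equivalent to $a/q>\frac{4+2s-s^2}{(2+s)^2}$; a short computation (which you carry out) shows this is the same condition as $\lambda>M_1(q,a)$. Your verification that $A+\sqrt B$ literally equals $s^2+2s+2-2x+s\sqrt{(s+2)^2-4x}$ when $q\in\R$ is exactly the point---for real $q$ the ``weakening'' $|q|\mapsto\Re q$ in case~(i) is no weakening at all, so the $M_1$-analysis applies on the entire half-line $\lambda>0$ rather than only on $\lambda\ge 2/q$.

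One remark: the extra paragraph you devote to guarding the squaring step (the appearance of $4+4s-s^2$ and the regime $s\ge 1+\sqrt5$) is sound but heavier than needed. The paper sidesteps this entirely by also solving \eqref{ineq-main} from the other direction in the real-$q$ subcase of case~(ii), obtaining $a/q>\frac{4+2s-s^2}{(2+s)^2}$; since this agrees with the $x<\frac{4s^2(1+s)}{(2+s)^2}$ obtained in case~(i), the two solutions corroborate each other without any sign bookkeeping. Either route is fine.
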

It is interesting to determine a set of parameters $\left(q,a,\lambda\right)$ for which we found the order of starlikeness.  
For simplicity, let us take the case $q\in\R$ and  depict this set by passing to the parameters $s=\lambda q>0$ 
and $t=\frac{a}{q}\in[0,1)$. In these new parameters, the desired  set can be described as $\left\{(s,t):\ s>0,\ \frac{4+2s-s^2}{(2+s)^2}<t<1
\right\}$, see  Fig.~\ref{fig:orders2}. In the one-dimensional case,  the order of starlikeness was earlier found for $t=0$ and $s\ge r_0\approx5.9$ in \cite{E-J-resolv}. 

\medskip

\end{document}